\DeclareSymbolFontAlphabet{\mathbb}{AMSb}
\newcommand\C{{\mathbb C}}
\newcommand\PP{{\mathbb P}}
\newcommand\R{{\mathbb R}}
\newcommand\Z{{\mathbb Z}}
\DeclareMathOperator{\conv}{conv}
\DeclareMathOperator{\rank}{rk}
\newcommand{\indeg}{\mathrm{indeg}}
\newcommand{\outdeg}{\mathrm{outdeg}}
\newcommand{\bzero}{\textnormal{\textbf{0}}}
\newcommand{\be}{\textnormal{\textbf{e}}}
\newcommand{\bq}{\textnormal{\textbf{q}}}
\newcommand{\bu}{\textnormal{\textbf{u}}}
\newcommand{\bv}{\textnormal{\textbf{v}}}
\newcommand{\bx}{\textnormal{\textbf{x}}}
\newcommand{\by}{\textnormal{\textbf{y}}}
\DeclareMathOperator{\Vol}{Vol}
\newcommand\arXiv[1]{\href{http://arxiv.org/abs/#1}{\texttt{arXiv:#1}}}
\theoremstyle{plain}
    \newtheorem{theorem}{Theorem}
    \newtheorem{corollary}[theorem]{Corollary}
    \newtheorem{lemma}[theorem]{Lemma}
    \newtheorem{proposition}[theorem]{Proposition}
\theoremstyle{definition}
    \newtheorem{remark}[theorem]{Remark}
    \newtheorem{example}[theorem]{Example}
    \newtheorem{definition}[theorem]{Definition}
    \newtheorem{question}{Question}
\title{Laplacian simplices associated to digraphs}
\author[G.\,Balletti]{Gabriele Balletti}
\address[G.\,Balletti]{Department of Mathematics\\Stockholm University\\SE-$106$\ $91$\
  Stockholm\\Sweden}
\email{balletti@math.su.se}
\author[T.\,Hibi]{Takayuki Hibi}
\address[T.\,Hibi]{Department of Pure and Applied Mathematics, Graduate School of Information Science and Technology, Osaka University, Suita, Osaka 565-0871, Japan}
\email{hibi@math.sci.osaka-u.ac.jp}
\author[M.\,Meyer]{Marie Meyer}
\address[M.\,Meyer]{Department of Mathematics\\University of Kentucky\\Lexington, KY 40506--0027}
\email{marie.meyer@uky.edu}
\author[A.\,Tsuchiya]{Akiyoshi Tsuchiya}
\address[A.\,Tsuchiya]{Department of Pure and Applied Mathematics, Graduate School of Information Science and Technology, Osaka University, Suita, Osaka 565-0871, Japan}
\email{a-tsuchiya@ist.osaka-u.ac.jp}
\subjclass[2010]{Primary: 52B20; Secondary: 05C20}
\keywords{Lattice polytope, Laplacian simplex, Laplacian of a digraph, spanning tree, Matrix-Tree Theorem}
\begin{document}

\maketitle
\begin{abstract}
We associate to a finite digraph $D$ a lattice polytope $P_D$ whose vertices are the rows of the Laplacian matrix of $D$. This generalizes a construction introduced by Braun and the third author. As a consequence of the Matrix-Tree Theorem, we show that the normalized volume of $P_D$ equals the complexity of $D$, and $P_D$ contains the origin in its relative interior if and only if $D$ is strongly connected. Interesting connections with other families of simplices are established and then used to describe reflexivity, $h^*$-polynomial, and integer decomposition property of $P_D$ in these cases. We extend Braun and Meyer's study of cycles by considering cycle digraphs. In this setting we characterize reflexivity and show there are only four non-trivial reflexive Laplacian simplices having the integer decomposition property.

\end{abstract}

\section{Introduction}
The use of linear algebra to study properties of graphs is an established technique in combinatorics and consequently has led to the development of the so called \emph{spectral graph theory}. There is an extensive literature on algebraic aspects of spectral graph theory and on how combinatorial properties are encoded in characteristic polynomials, eigenvalues and eigenvectors of adjacency or Laplacian matrices of graphs (see \cite{Chu97} for a survey). It is tempting to take a step forward and associate a polytope $P_G$ to any graph $G$, by interpreting the rows of a matrix encoding the data of $G$ as the vertices of $P_G$. This is the case of the \emph{edge polytope} \cite{OH98,Vil98}, the convex hull of the rows of the unsigned vertex-edge incidence matrix of a graph, whose geometric and combinatorial properties have been extensively studied in the last two decades (see e.g. \cite{MHNH11,TZ14}), and used to build counterexamples \cite{OH99}. Recently the third author and Braun \cite{BM17} took a similar direction by associating to any graph $G$ the simplex $T_G$ (called the \emph{Laplacian simplex}) whose vertices are the rows of the Laplacian matrix of $G$. They established basic properties of $T_G$ and study reflexivity, the integer decomposition property, and unimodality of the Ehrhart $h^*$-vectors of $T_G$ for some special classes of graph.

Our contribution is to provide a more general setting for the investigation on Laplacian simplices. We do this by allowing $G$ to be a directed multigraph. In this way the objects studied in \cite{BM17} can be seen as a special case of our setting (see Remark~\ref{rmk:BM}). We have reasons to believe this generalization is the correct direction to take. Indeed, in the undirected and simple case, the origin of a Laplacian simplex coincides with the barycenter of its vertices, which is an uncommon property for a lattice simplex. In our setting, it is clarified (Proposition~\ref{prop:dependence} and Corollary~\ref{cor:0ininterior}) that this happens only for special digraphs, i.e. they need to be strongly connected and have the same number of spanning trees converging to each vertex. Moreover, in the original settings the volume of a Laplacian simplex associated to a graph with $n$ vertices equals $n$ times the number of spanning trees of the graph $G$. Extending to digraphs, it turns out (Proposition~\ref{prop:volume}) that the factor $n$ appears because in the undirected case each vertex has the same number of spanning trees converging to it.

\subsection*{Main results and organization of the paper}
In Section~\ref{sec:2} we set notation, basic definitions and prove the first important properties of Laplacian simplices in this new settings. In particular, we prove that the Laplacian simplex $P_D$ associated to a digraph $D$ with $n$ vertices, satisfies the following properties.
\begin{enumerate}
\item $P_D$ is a $(n-1)$-simplex if and only if $D$ has positive complexity, i.e. if $D$ has at least a spanning converging tree (Proposition~\ref{prop:characterization_rank}).
\end{enumerate}
Now assume that $D$ has positive complexity.
\begin{enumerate}
\setcounter{enumi}{1}
\item The numbers of spanning trees converging to each of the vertices of $D$ encode the barycentric coordinates of the origin with respect to the vertices of $P_D$ (Proposition~\ref{prop:dependence}).
\item $P_D$ contains the origin $\bzero$, which is in the strict relative interior of $P_D$ if and only if $D$ is strongly connected (Corollary~\ref{cor:0ininterior}).
\item The normalized volume of $P_D$ equals the total complexity of $D$, i.e. the total number of spanning converging trees (Proposition~\ref{prop:volume}).
\end{enumerate}

In Section~\ref{sec:3}, moreover we prove that, under some assumptions on $D$, $P_D$ is equivalent to the simplex associated to a weighted projective space (Proposition~\ref{prop:wps}). Under even more restrictive assumptions, $P_D$ is equivalent to one of the $\Delta_{(1,\bq)}$ simplices described in \cite{BDS16}. In such cases, we use this equivalence to characterize reflexivity (Corollary~\ref{cor:refl}), describe the Ehrhart $h^*$-polynomial, and the integer decomposition property of $P_D$ in terms of the spanning converging trees of $D$.

In Section~\ref{sec:4} we use these descriptions to extend the study of cycle graphs of Braun--Meyer \cite{BM17} to \emph{cycle digraphs}, i.e. strongly connected simple digraphs whose underlying graph is a cycle (see Definition~\ref{def:cycle}). We prove the following results.
\begin{enumerate}
\item In Proposition~\ref{prop:simple} we prove that a Laplacian simplex associated to a simple digraph has at most one interior lattice point. In particular, Laplacian simplices associated to cycle digraphs have exactly one interior point.
\item In Theorem~\ref{thm:terminal} we prove that the Laplacian simplices $P_D$ associated to a cycle digraph is terminal Fano, i.e. it contains no lattice points other than the origin and its vertices, unless $D$ is one of the following six digraphs.
\begin{figure}[H]
\centering
\begin{tikzpicture}[->,>=stealth',shorten >=1pt,auto,node distance=2cm,
  thick,main node/.style={},scale=.8]

  \node[main node] at (60:1)	(1) {$1$};
  \node[main node] at (300:1)	(2) {$2$};
  \node[main node] at (180:1)	(3) {$3$};

  \path[every node/.style={font=\sffamily\small}]
    (1) edge [bend left=20] (2)
		edge [bend left=20] (3)
    (2) edge [bend left=20] (3)
    (3) edge [bend left=20] (1);
\end{tikzpicture}
\hspace*{.3cm}
\begin{tikzpicture}[->,>=stealth',shorten >=1pt,auto,node distance=2cm,
  thick,main node/.style={},scale=.8]

  \node[main node] at (60:1)	(1) {$1$};
  \node[main node] at (300:1)	(2) {$2$};
  \node[main node] at (180:1)	(3) {$3$};

  \path[every node/.style={font=\sffamily\small}]
    (1) edge [bend left=20] (2)
		edge [bend left=20] (3)
    (2) edge [bend left=20] (3)
		edge [bend left=20] (1)
    (3) edge [bend left=20] (1);
\end{tikzpicture}
\hspace*{.3cm}
\begin{tikzpicture}[->,>=stealth',shorten >=1pt,auto,node distance=2cm,
  thick,main node/.style={},scale=.8]

  \node[main node] at (60:1)	(1) {$1$};
  \node[main node] at (300:1)	(2) {$2$};
  \node[main node] at (180:1)	(3) {$3$};

  \path[every node/.style={font=\sffamily\small}]
    (1) edge [bend left=20] (2)
		edge [bend left=20] (3)
    (2) edge [bend left=20] (3)
		edge [bend left=20] (1)
    (3) edge [bend left=20] (1)
 		edge [bend left=20] (2);
\end{tikzpicture}
\hspace*{.3cm}
\begin{tikzpicture}[->,>=stealth',shorten >=1pt,auto,node distance=2cm,
  thick,main node/.style={},scale=.8]

  \node[main node] at (60:1)	(1) {$1$};
  \node[main node] at (330:1)	(2) {$2$};
  \node[main node] at (240:1)	(3) {$3$};
  \node[main node] at (150:1)	(4) {$4$};

  \path[every node/.style={font=\sffamily\small}]
    (1) edge [bend left=20] (2)
		edge [bend left=20] (4)
    (2) edge [bend left=20] (3)
    (3) edge [bend left=20] (4)
		edge [bend left=20] (2)
    (4) edge [bend left=20] (1);
\end{tikzpicture}
\hspace*{.3cm}
\begin{tikzpicture}[->,>=stealth',shorten >=1pt,auto,node distance=2cm,
  thick,main node/.style={},scale=.8]

  \node[main node] at (60:1)	(1) {$1$};
  \node[main node] at (330:1)	(2) {$2$};
  \node[main node] at (240:1)	(3) {$3$};
  \node[main node] at (150:1)	(4) {$4$};

  \path[every node/.style={font=\sffamily\small}]
    (1) edge [bend left=20] (2)
		edge [bend left=20] (4)
    (2) edge [bend left=20] (3)
		edge [bend left=20] (1)
    (3) edge [bend left=20] (4)
		edge [bend left=20] (2)
    (4) edge [bend left=20] (1);
\end{tikzpicture}
\hspace*{.3cm}
\begin{tikzpicture}[->,>=stealth',shorten >=1pt,auto,node distance=2cm,
  thick,main node/.style={},scale=.8]

  \node[main node] at (60:1)	(1) {$1$};
  \node[main node] at (330:1)	(2) {$2$};
  \node[main node] at (240:1)	(3) {$3$};
  \node[main node] at (150:1)	(4) {$4$};

  \path[every node/.style={font=\sffamily\small}]
    (1) edge [bend left=20] (2)
		edge [bend left=20] (4)
    (2) edge [bend left=20] (3)
		edge [bend left=20] (1)
    (3) edge [bend left=20] (4)
		edge [bend left=20] (2)
    (4) edge [bend left=20] (1)
		edge [bend left=20] (3);
\end{tikzpicture}
\end{figure}
\item In Theorem~\ref{thm:ref_dir_cycles} we characterize the reflexivity of $P_D$ in terms of combinatorial properties of the cycle digraph $D$.
\item In Theorem~\ref{thm_IDP} we prove that a reflexive Laplacian simplex $P_D$ has the integer decomposition property if and only if $D$ is the oriented cycle $1 \to 2 \to \cdots \to n-1 \to n \to 1$ for any $n$, or one of the following four exceptional digraphs.
\begin{figure}[H]
\centering
\begin{tikzpicture}[->,>=stealth',shorten >=1pt,auto,node distance=2cm,
  thick,main node/.style={},scale=.8]

  \node[main node] at (60:1)	(1) {$1$};
  \node[main node] at (300:1)	(2) {$2$};
  \node[main node] at (180:1)	(3) {$3$};

  \path[every node/.style={font=\sffamily\small}]
    (1) edge [bend left=20] (2)
		edge [bend left=20] (3)
    (2) edge [bend left=20] (3)
    (3) edge [bend left=20] (1);
\end{tikzpicture}
\hspace*{1cm}
\begin{tikzpicture}[->,>=stealth',shorten >=1pt,auto,node distance=2cm,
  thick,main node/.style={},scale=.8]

  \node[main node] at (60:1)	(1) {$1$};
  \node[main node] at (300:1)	(2) {$2$};
  \node[main node] at (180:1)	(3) {$3$};

  \path[every node/.style={font=\sffamily\small}]
    (1) edge [bend left=20] (2)
		edge [bend left=20] (3)
    (2) edge [bend left=20] (3)
		edge [bend left=20] (1)
    (3) edge [bend left=20] (1);
\end{tikzpicture}
\hspace*{1cm}
\begin{tikzpicture}[->,>=stealth',shorten >=1pt,auto,node distance=2cm,
  thick,main node/.style={},scale=.8]

  \node[main node] at (60:1)	(1) {$1$};
  \node[main node] at (300:1)	(2) {$2$};
  \node[main node] at (180:1)	(3) {$3$};

  \path[every node/.style={font=\sffamily\small}]
    (1) edge [bend left=20] (2)
		edge [bend left=20] (3)
    (2) edge [bend left=20] (3)
		edge [bend left=20] (1)
    (3) edge [bend left=20] (1)
 		edge [bend left=20] (2);
\end{tikzpicture}
\hspace*{1cm}
\begin{tikzpicture}[->,>=stealth',shorten >=1pt,auto,node distance=2cm,
  thick,main node/.style={},scale=.8]

  \node[main node] at (60:1)	(1) {$1$};
  \node[main node] at (330:1)	(2) {$2$};
  \node[main node] at (240:1)	(3) {$3$};
  \node[main node] at (150:1)	(4) {$4$};

  \path[every node/.style={font=\sffamily\small}]
    (1) edge [bend left=20] (2)
		edge [bend left=20] (4)
    (2) edge [bend left=20] (3)
    (3) edge [bend left=20] (4)
		edge [bend left=20] (2)
    (4) edge [bend left=20] (1);
\end{tikzpicture}
\end{figure}
\item In Theorem~\ref{thm:non_unim} we construct a family of reflexive Laplacian simplices with symmetric but non-unimodal $h^*$-vector $(1,\ldots,1,2,\ldots,2,1,\ldots,1,2,\ldots,2,1,\ldots,1,2,\ldots,2,1,\ldots,1)$.
\end{enumerate}

In the final Section~\ref{sec:underlying} we try to understand how the structure of the underlying simple and undirected graph of a digraph affects the reflexivity of its Laplacian polytope. In particular we show that there is a graph which is not the underlying graph of any simple directed graph whose Laplacian simplex is reflexive. A more general version of this problem (Question~\ref{q3}) remains open.

\section*{Acknowledgments}
The authors would like to thank Benjamin Braun for helpful comments.
The project started during the ``Workshop on Convex Polytopes for Graduate Students'' at Osaka University. The first author is partially supported by the Vetenskapsr{\aa}det grant~NT:2014-3991. The fourth author is partially supported by Grant-in-Aid for JSPS Fellows 16J01549.
\section{The Laplacian Polytope Construction and properties of Laplacian simplices}
\label{sec:2}

\subsection{The Laplacian of a digraph}
Let $D$ be a finite directed graph (\emph{digraph}) on the vertex set $V(D)=[n]$, where $[n] \coloneqq \{ 1 , \ldots , n \}$. Let $E(D)$ be the set of the \emph{directed edges} of $D$. A directed edge $e = (i,j) \in E(D)$ points from a vertex $i$ (called the \emph{tail} of $e$) to another vertex $j$ (called the \emph{head} of $e$). Multiple directed edges between vertices are allowed, and we denote by $a_{i,j}$ the number of directed edges having tail on the vertex $i$ and head on the vertex $j$ of $D$, with $i,j\in [n]$ and $i \neq j$. Since loops do not affect the Laplacian matrix, we assume $D$ to be without loops, and thus $a_{i,i}=0$ for all $i \in [n]$. The number of edges with vertex $i$ as a tail is called the \emph{outdegree} of $i$ and is denoted by $\outdeg(i)$, while the number of edges with vertex $i$ as a head is called the \emph{indegree} of $i$ and is denoted by $\indeg(i)$. We call $D$ \emph{strongly connected} if it contains a directed path from $i$ to $j$ for every pair of vertices $i,j \in [n]$ and \emph{weakly connected} if there exists a path (not necessarily directed) between $i$ and $j$ for every pair of vertices $i, j \in [n]$. 
In this paper we assume $D$ has no isolated vertices, i.e. vertices with indegree and outdegree equal to zero. A \emph{converging tree} is a weakly connected digraph having one vertex with outdegree zero, called the \emph{root} of the tree, while all other vertices have outdegree one.
%A \emph{converging forest} is a digraph having converging trees as weakly connected components. The \emph{set of roots} of a converging forest are the roots of the converging trees in the forest.
We say that a subgraph $D'$ of $D$ is \emph{spanning} if the vertex set of $D'$ is $[n]$.
%We denote by $d_D$ the minimum number of disjoint converging trees a spanning converging forest of $D$ can have.

All the data of $D$ can be encoded in the \emph{adjacency matrix} of $D$, that is, the $n \times n$ matrix $A(D) \coloneqq (a_{i,j})_{1 \leq i,j \leq n}$. We define the \emph{outdegree matrix} of $D$ to be $O(D) \coloneqq (d_{i,j})_{1 \leq i,j \leq n}$, the $n \times n$ matrix with $d_{i,j}=\outdeg(i)$, if $i=j$, and $d_{i,j}=0$ otherwise. We define the \emph{Laplacian matrix} of $D$ to be the matrix $L(D) \coloneqq O(D) - A(D)$.

Observe the sum of the entries of each row of $L(D)$ is zero. Thus the rank of the Laplacian matrix is never maximal, i.e.
\begin{equation}
\label{eq:rank}
\rank (L(D)) \leq n-1.
\end{equation}
A combinatorial interpretation for having equality in \eqref{eq:rank} is given by the Matrix-Tree Theorem, which is presented here in its generalized version for digraphs. The interpretation is given in terms of spanning converging trees of $D$. For any $i \in [n]$, we denote by $c_i$ the number of spanning trees which converge to $i$, i.e. the converging trees of $D$ with $n$ vertices having $i$ as the root. We denote by $c(D)$ the total number of converging spanning trees of $D$, i.e. $c(D) \coloneqq \sum_{i = 1}^n c_i$. The number $c(D)$ is usually referred to as the \emph{complexity} of the digraph $D$.

\begin{theorem}[{Matrix-Tree Theorem \cite[Theorem~5.6.4]{Sta99}}]
\label{thm:MTT}
Let $D$ be a digraph without loops on the vertex set $[n]$. Let $i,j \in [n]$, and $L(D)_{i,j}$ the matrix obtained from $L(D)$ by removing its $i$-th row and $j$-th column. Then the determinant of $L(D)_{i,j}$ equals, up to a change of sign, the number of spanning trees of $D$ converging to $i$, i.e.
\[
(-1)^{i+j} \det L(D)_{i,j} = \det L(D)_{i,i} = c_i.
\]
In particular the complexity of $D$ is
\[
c(D)=\sum_{i=1}^n \det L(D)_{i,i}.
\]
\end{theorem}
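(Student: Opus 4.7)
The plan is to treat the two equalities separately. For the first equality $(-1)^{i+j}\det L(D)_{i,j} = \det L(D)_{i,i}$, I use only the fact that each row of $L(D)$ sums to zero. Form the auxiliary matrix $\tilde L$ by replacing row $i$ of $L(D)$ with the all-ones row, and compute $\det \tilde L$ in two ways. On one hand, cofactor expansion along row $i$ of $\tilde L$ gives $\det \tilde L = \sum_{k=1}^n (-1)^{i+k}\det L(D)_{i,k}$. On the other hand, for any fixed $j$, adding every other column of $\tilde L$ to column $j$ transforms that column into the vector with $n$ in row $i$ and zero in all other rows (since the rows $k\neq i$ still have zero sum), and expanding along this new column yields $\det \tilde L = (-1)^{i+j}\, n\, \det L(D)_{i,j}$. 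Comparing these two expressions across values of $j$ shows $(-1)^{i+j}\det L(D)_{i,j}$ is independent of $j$; specializing to $j=i$ settles the first equality.

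For the second equality $\det L(D)_{i,i} = c_i$, the cleanest route is via the Cauchy--Binet formula. Letting the directed edges of $D$ index the columns, introduce matrices $B^+,B\in\Z^{n\times |E(D)|}$, where $(B^+)_{v,e}=1$ iff $v$ is the tail of $e$ and $0$ otherwise, and $B$ is the signed incidence matrix with $B_{v,e}\in\{+1,-1,0\}$ according to whether $v$ is the tail, head, or neither endpoint of $e$. A direct entrywise check of $(B^+ B^T)_{v,w}$ verifies the factorization $L(D)=B^+ B^T$. Deleting row $i$ from each factor and applying Cauchy--Binet yields
\[
\det L(D)_{i,i} \;=\; \sum_{S \subseteq E(D),\, |S| = n-1} \det (B^+)_{[n]\setminus\{i\},\, S} \cdot \det B_{[n]\setminus\{i\},\, S}.
\]

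It remains to identify the nonzero summands and check their signs. Each column of $B^+$ has a single nonzero entry at its tail, so $\det (B^+)_{[n]\setminus\{i\},\, S}$ is nonzero precisely when the edges of $S$ have distinct tails covering $[n]\setminus\{i\}$, i.e.\ every $k\neq i$ is the tail of exactly one edge in $S$. By the classical total unimodularity of graph incidence matrices, $\det B_{[n]\setminus\{i\},\, S}$ is nonzero precisely when the underlying edges of $S$ form a spanning tree on $[n]$, and in that case equals $\pm 1$. Together these conditions force $S$ to be a spanning tree in which each non-root vertex has a unique out-edge; such a tree must be oriented toward $i$, and so is a spanning converging tree rooted at $i$.

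The main obstacle is then the sign bookkeeping. For such an $S$, list the non-root vertices $k_1,\ldots,k_{n-1}$ in order of non-increasing distance from $i$ in the tree, and reorder the columns of both submatrices so that the unique out-edge at $k_j$ sits in column $j$. Under this common reordering $(B^+)_{[n]\setminus\{i\},\, S}$ becomes the identity matrix, while in $B_{[n]\setminus\{i\},\, S}$ each column $j$ has $+1$ on the diagonal (the tail $k_j$) and at most one off-diagonal $-1$ (the head of the edge, provided it is not $i$). The distance-ordering ensures the heads lie strictly below the diagonal, so this matrix is lower triangular with determinant $+1$. Hence every converging spanning tree rooted at $i$ contributes exactly $+1$ to the Cauchy--Binet sum, yielding $\det L(D)_{i,i}=c_i$ and completing the proof when combined with the first equality.
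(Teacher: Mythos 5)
Your proof is correct, but note that the paper does not prove this statement at all: it is imported verbatim from Stanley \cite[Theorem~5.6.4]{Sta99}, so there is no internal argument to compare against. What you supply is a self-contained proof along the standard lines: the first equality via the row-sum trick (replace row $i$ by the all-ones vector, compute the determinant by adding all columns into column $j$, conclude that $(-1)^{i+j}\det L(D)_{i,j}$ is independent of $j$), and the second via the factorization $L(D)=B^{+}B^{T}$ together with Cauchy--Binet, identifying the nonvanishing $(n-1)$-subsets of columns with the spanning trees converging to $i$ and checking each contributes $+1$. All the steps check out, including the entrywise verification of $L(D)=B^{+}B^{T}$ in the loopless multigraph setting and the sign analysis. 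Two points are left implicit and deserve a sentence each if you write this up: (i) when you ``reorder the columns of both submatrices'' you also need the rows listed in the order $k_1,\ldots,k_{n-1}$ for the first factor to become the identity and for the triangularity claim; this is harmless because the product of the two minors is invariant under a \emph{common} permutation of rows as well as of columns, but it should be said. (ii) You use that in a converging tree every non-root vertex's unique out-edge points to its parent in the tree rooted at $i$ (so the head always has strictly smaller distance to the root); this follows from a short pigeonhole argument on the subtree hanging below a hypothetical downward edge and is what makes the lower-triangularity work. Finally, you quote the classical fact that an $(n-1)\times(n-1)$ minor of the signed incidence matrix is $\pm1$ or $0$ according to whether the chosen edges form a spanning tree; since the paper itself only cites Stanley, relying on this standard lemma is a reasonable trade-off, though a fully self-contained write-up would include its (equally short) proof.
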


%Note that $0$ is both a left and right eigenvalue of $L(D)$, while $\bone=(1,\ldots,1)$ is always a right eigenvector associated to $0$. By Gershgorin circle theorem, all the nonzero eigenvalues of $L(D)$ have positive real part. The sum of the eigenvalues equals the trace of $L(D)$, and therefore the total weight of $D$, i.e. the sum of the weights of all the arcs.

\subsection{The Laplacian polytope associated to a digraph}
Let $D$ be a digraph on the vertex set $[n]$. To $D$ we associate a convex polytope in $\R^n$ having vertices in the integer lattice $\Z^n$. We call the \emph{Laplacian polytope} associated to $D$ the polytope $P_D:=\conv({\{\bv_1, \ldots, \bv_n\}}) \subseteq \R^n$, where $\bv_i$ is the $i$-th row of the Laplacian matrix of $D$. The polytope $P_D$ is not full dimensional; since the sum of the entries in each row of $L(D)$ vanishes, $P_D$ is contained in the hyperplane $H \coloneqq \{\bx=(x_1,\ldots,x_n) : \sum_{i=1}^n x_i = 0 \}$ of $\R^n$. In particular, the dimension of the Laplacian polytope, $\dim(P_D)$, equals the rank of the Laplacian matrix $L(D)$. When the rank of $L(D)$ is equal to $n-1$, then $P_D$ is a simplex, called the \emph{Laplacian simplex} associated to $D$. 

\begin{remark}
\label{rmk:BM}
The Laplacian simplex in this context is a generalization of the Laplacian simplex introduced by Braun-Meyer in \cite{BM17}.
For a connected simple graph $G$, they define the simplex $T_G$ as the convex hull of the rows of the graph Laplacian matrix of $G$.
The Laplacian $L(G)$ of $G$ can be interpreted as the Laplacian of a digraph $D_G$, and thus the resulting simplices are equal, that is, $T_G = P_{D_G}$.  
\end{remark}

Two lattice polytopes $P,Q\subset \R^n$ are said to be \emph{unimodularly equivalent} if there exists an affine lattice automorphism $\varphi\in \textrm{GL}_n(\Z)\ltimes\Z^n$ of $\Z^n$ such that $\varphi_\R(P)=Q$. Sometimes it is convenient to work with full dimensional lattice polytopes, i.e. lattice polytopes embedded in a space of their same dimension. Given a Laplacian simplex $P_D$, one can easily get a full dimensional unimodularly equivalent copy of $P_D$ by considering the lattice polytope defined as the convex hull of the rows of $L(D)$ with one column deleted. An example of this can be observed in Example~\ref{ex:first_example}.

\begin{example}
\label{ex:first_example}
Let $D$ be the following digraph with its Laplacian matrix $L(D)$.

\begin{center}
\begin{tabular}{c}
\begin{tikzpicture}[->,>=stealth',shorten >=1pt,auto,node distance=2cm,
  thick,main node/.style={},scale=1]

  \node[main node] at (60:1)	(1) {$1$};
  \node[main node] at (300:1)	(2) {$2$};
  \node[main node] at (180:1)	(3) {$3$};

  \path[every node/.style={font=\sffamily\small}]
    (1) edge [bend left=20] (2)
    (2) edge [bend left=20] (3)
    (3) edge [bend left=20] (1)
	(3) edge [bend left=20] (2);
\end{tikzpicture}
\end{tabular}
  \qquad \qquad 
  $L(D)=
\begin{pmatrix}
1 & -1 & 0 \\
0 & 1  & -1\\
-1& -1 & 2
\end{pmatrix}$
\end{center}
Note that $L(D)$ has rank two, which means $P_D$ is a two dimensional simplex in $\R^3$. Full dimensional unimodularly equivalent copies of $P_D$ can be obtained by deleting any of the columns of $L(D)$ and considering the convex hull of the rows as in Figure~\ref{fig:ex1}.

\begin{center}
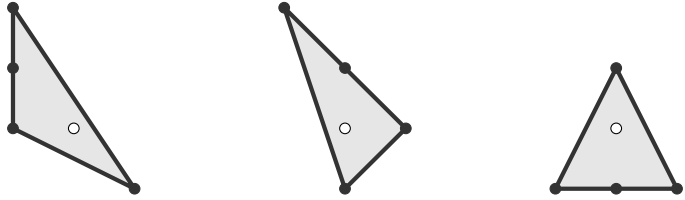
\begin{figure}[h]
\begin{tikzpicture}[scale=.8]
\fill[black!10] (-1,0)--(1,-1)--(-1,2)--cycle;
\draw[ultra thick,black!80,line join=round] (-1,0)--(1,-1)--(-1,2)--cycle;
\draw[fill=black!80,black!80] (-1,0) circle (0.25em) node[below]{};
\draw[fill=black!80,black!80] (-1,1) circle (0.25em) node[below]{};
\draw[fill=black!80,black!80] (-1,2) circle (0.25em) node[below]{};
\draw[black,fill=white] (0,0) circle (0.25em) node[below]{};
\draw[fill=black!80,black!80] (1,-1) circle (0.25em) node[below]{};
\end{tikzpicture}
\hspace*{1.5cm}
\begin{tikzpicture}[scale=.8]
\fill[black!10] (1,0)--(0,-1)--(-1,2)--cycle;
\draw[ultra thick,black!80,line join=round] (1,0)--(0,-1)--(-1,2)--cycle;
\draw[fill=black!80,black!80] (-1,2) circle (0.25em) node[below]{};
\draw[fill=black!80,black!80] (0,-1) circle (0.25em) node[below]{};
\draw[fill=black!80,black!80] (1,0) circle (0.25em) node[below]{};
\draw[black,fill=white] (0,0) circle (0.25em) node[below]{};
\draw[fill=black!80,black!80] (0,1) circle (0.25em) node[below]{};
\end{tikzpicture}
\hspace*{1.5cm}
\begin{tikzpicture}[scale=.8]
\fill[black!10] (1,-1)--(0,1)--(-1,-1)--cycle;
\draw[ultra thick,black!80,line join=round] (1,-1)--(0,1)--(-1,-1)--cycle;
\draw[fill=black!80,black!80] (-1,-1) circle (0.25em) node[below]{};
\draw[fill=black!80,black!80] (0,-1) circle (0.25em) node[below]{};
\draw[fill=black!80,black!80] (1,-1) circle (0.25em) node[below]{};
\draw[black,fill=white] (0,0) circle (0.25em) node[below]{};
\draw[fill=black!80,black!80] (0,1) circle (0.25em) node[below]{};
\end{tikzpicture}
\caption{\label{fig:ex1} Three unimodularly equivalent full dimensional copies of $P_D$ obtained by deleting the first, second, and third columns of $L(D)$, respectively.}
\end{figure} 
\end{center}
\end{example}

\subsection{Properties of Laplacian simplices}
From the Matrix-Tree Theorem (Theorem~\ref{thm:MTT}) the following characterization can be immediately obtained.

\begin{proposition}
\label{prop:characterization_rank}
Let $D$ be a digraph on $n$ vertices. The following are equivalent:
\begin{enumerate}[\qquad 1.]
\item $D$ has positive complexity $c(D)$;
\item $\rank ( L(D) ) = n-1$;
\item $P_D$ is an $(n-1)$-simplex.
\end{enumerate}
\end{proposition}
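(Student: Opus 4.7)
The plan is to traverse the cycle $(1)\Leftrightarrow(2)\Leftrightarrow(3)$ using the Matrix-Tree Theorem as the central tool. Since $\rank L(D)\leq n-1$ by \eqref{eq:rank}, equality in (2) holds iff some $(n-1)\times(n-1)$ minor of $L(D)$ is nonzero; by Theorem~\ref{thm:MTT}, every such minor equals $\pm c_i$ for some $i\in[n]$, so the rank equals $n-1$ iff some $c_i$ is positive, i.e.\ iff $c(D)=\sum_i c_i>0$. This gives $(1)\Leftrightarrow(2)$.

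For $(3)\Rightarrow(2)$ the argument is immediate: if $P_D$ is an $(n-1)$-simplex, the vertices $\bv_1,\ldots,\bv_n$ are affinely independent, so the $n-1$ differences $\bv_i-\bv_n$ are linearly independent and $\rank L(D)\geq n-1$.

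The main step is $(2)\Rightarrow(3)$. Each $\bv_i$ lies in the hyperplane $H=\{\bx\in\R^n:\sum_i x_i=0\}$ of dimension $n-1$, so $\dim P_D\leq n-1$, with equality precisely when $\bzero$ belongs to the affine span of $\{\bv_1,\ldots,\bv_n\}$; equivalently, when there is a relation $\sum_i\lambda_i\bv_i=\bzero$ with $\sum_i\lambda_i=1$. I would produce such a relation from the adjugate identity $\mathrm{adj}(L(D))\cdot L(D)=\det(L(D))\cdot I=0$: a direct computation using Theorem~\ref{thm:MTT} shows that every row of $\mathrm{adj}(L(D))$ is the vector $(c_1,c_2,\ldots,c_n)$, so this vector lies in the left null space of $L(D)$, yielding $\sum_i c_i\bv_i=\bzero$. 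Since the already-established $(1)\Leftrightarrow(2)$ gives $c(D)=\sum_i c_i>0$, rescaling by $1/c(D)$ produces the required affine combination, and hence $\dim P_D=n-1$.

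The subtle point is in $(2)\Rightarrow(3)$: the rank condition (2) supplies only a \emph{linear} dependence among the rows of $L(D)$, not automatically an affine one. The Matrix-Tree Theorem contributes twice here—once to detect the rank drop, and once (via the adjugate) to produce the specific row relation $\sum_i c_i\bv_i=\bzero$ whose coefficients are nonnegative and sum to the positive number $c(D)$, precisely what is needed to upgrade linear dependence to affine dependence.
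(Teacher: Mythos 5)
Your proof is correct and takes essentially the paper's intended route: the paper deduces the proposition directly from the Matrix--Tree Theorem together with its earlier remark that $\dim(P_D)$ equals $\rank(L(D))$, which is exactly your minor argument for $(1)\Leftrightarrow(2)$. The adjugate computation you use for $(2)\Rightarrow(3)$, producing $\sum_{i} c_i \bv_i = \bzero$, is precisely the relation the paper establishes separately as Proposition~\ref{prop:dependence} via Laplace expansion along a column, so you have merely made explicit a detail the paper leaves implicit.
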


Following the work of Braun-Meyer \cite{BM17}, we focus our attention to the case in which a digraph $D$ on $n$ vertices defines an $(n-1)$-simplex. Proposition~\ref{prop:characterization_rank} asserts we will always assume the digraph $D$ has positive complexity. As another consequence of Theorem~\ref{thm:MTT}, we deduce the following result.

\begin{proposition}
\label{prop:dependence}
Let $D$ a digraph with positive complexity. Then the numbers of spanning converging trees $c_1,\ldots,c_n$ of $D$ encode the unique linear dependence among the vertices $\bv_1, \ldots, \bv_n$ of $P_D$, i.e.
\[
\sum_{i=1}^n c_i \bv_i = \bzero.
\]
\end{proposition}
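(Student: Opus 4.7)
The plan is to exhibit $(c_1,\ldots,c_n)$ as a specific row of the adjugate matrix of $L(D)$ and then exploit the identity $\operatorname{adj}(L(D))\cdot L(D)=\det(L(D))\cdot I$.

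First, I would record the uniqueness part. By Proposition~\ref{prop:characterization_rank}, the hypothesis $c(D)>0$ gives $\rank L(D)=n-1$, so the left kernel of $L(D)$ is $1$-dimensional; any nonzero vector in it is determined up to a scalar. It therefore suffices to produce one explicit nonzero dependence and to observe that the vector $(c_1,\ldots,c_n)$ is nonzero, which is automatic because $c_1+\cdots+c_n=c(D)>0$.

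For the existence, I would use the classical identity
\[
\operatorname{adj}(L(D))\cdot L(D)=\det(L(D))\cdot I_n=\bzero,
\]
where the second equality holds because $\rank L(D)=n-1$ implies $\det L(D)=0$. Consequently every row of $\operatorname{adj}(L(D))$ lies in the left kernel of $L(D)$. Next I would identify these rows using the Matrix-Tree Theorem. Recall that the $(i,j)$-entry of $\operatorname{adj}(L(D))$ is the $(j,i)$-cofactor $(-1)^{j+i}\det L(D)_{j,i}$. Theorem~\ref{thm:MTT} asserts
\[
(-1)^{j+i}\det L(D)_{j,i}=\det L(D)_{j,j}=c_j,
\]
so the $(i,j)$-entry of $\operatorname{adj}(L(D))$ equals $c_j$, independent of $i$. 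Hence every row of $\operatorname{adj}(L(D))$ is the single vector $(c_1,\ldots,c_n)$, and the adjugate identity collapses to
\[
(c_1,\ldots,c_n)\,L(D)=\bzero,
\]
which, when one reads off the components, is exactly $\sum_{i=1}^n c_i\bv_i=\bzero$.

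The step I expect to require the most care is the sign bookkeeping in the cofactor computation: the Matrix-Tree Theorem is stated with a signed minor where row $i$ and column $j$ are deleted, while the adjugate convention transposes the role of row and column. Fortunately Theorem~\ref{thm:MTT} is stated precisely as the equality $(-1)^{i+j}\det L(D)_{i,j}=c_i$, so by applying it with the roles of $i$ and $j$ swapped one sees directly that the $(i,j)$-cofactor appearing in $\operatorname{adj}(L(D))$ equals $c_j$, making the identification of the kernel vector immediate.
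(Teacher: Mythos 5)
Your proof is correct and is essentially the paper's argument in matrix form: the paper expands $\det L(D)=0$ along each column and uses Theorem~\ref{thm:MTT} to identify the signed minors $(-1)^{i+j}\det L(D)_{i,j}$ with $c_i$, which is exactly the computation packaged in your adjugate identity $\operatorname{adj}(L(D))\,L(D)=\det(L(D))\,I=0$. Your explicit uniqueness step via $\rank L(D)=n-1$ and the nonvanishing of $(c_1,\ldots,c_n)$ is a small addition that the paper leaves implicit.
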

\begin{proof}
Since the determinant of $L(D)$ is zero, the Laplace expansion along the $j$-th column of $L(D)$ yields $\sum_{i=1}^n (-1)^{i+j}\det{L(D)_{i,j}}v_{i,j} = 0$, where $L(D)_{i,j}$ is the matrix of $L(D)$ obtained by removing the $i$-th row and $j$-th column of $L(D)$, and $v_{i,j}$ is the $j$-th entry of $\bv_i$. By Theorem~\ref{thm:MTT}, $\det{L(D)_{i,i}} = c_i$.
\end{proof}

\begin{corollary}
\label{cor:0ininterior}
Let $D$ be a digraph on $n$ vertices having positive complexity. Then $\bzero \in P_D$. Moreover $\bzero$ is an interior point of $P_D$ if and only if $D$ is strongly connected.
\end{corollary}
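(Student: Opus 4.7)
The plan is to derive the corollary directly from Proposition~\ref{prop:dependence}. Since $D$ has positive complexity, we have $c(D)=\sum_{i=1}^n c_i>0$, so dividing the linear dependence $\sum_{i=1}^n c_i\bv_i=\bzero$ by $c(D)$ expresses $\bzero$ as the convex combination $\sum_{i=1}^n(c_i/c(D))\bv_i$ of the vertices of $P_D$. This shows $\bzero\in P_D$, and moreover identifies the barycentric coordinates of $\bzero$ with the normalized complexities $c_i/c(D)$. Because $P_D$ is a simplex (by Proposition~\ref{prop:characterization_rank}), barycentric coordinates are unique and $\bzero$ lies in the strict relative interior if and only if all these barycentric coordinates are strictly positive, that is, if and only if $c_i>0$ for every $i\in[n]$.

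It therefore remains to prove that $c_i>0$ for all $i\in[n]$ if and only if $D$ is strongly connected. For the forward direction, suppose $c_i>0$ for all $i$. Given any two vertices $i,j\in[n]$, a spanning tree converging to $j$ contains a directed path from $i$ to $j$, so $i$ reaches $j$. By symmetry $j$ reaches $i$ as well, hence $D$ is strongly connected. Conversely, suppose $D$ is strongly connected and fix $i\in[n]$. One can construct a spanning converging tree rooted at $i$ as follows: starting with the vertex set $\{i\}$, repeatedly pick a vertex $j$ not yet included and, using strong connectivity, choose a directed path from $j$ to some already-included vertex; retain the first outgoing edge of this path from $j$. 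This assigns to every vertex $j\neq i$ a unique outgoing edge, and no directed cycle can arise because each newly added vertex connects to a vertex already known to reach $i$. The resulting subgraph is a spanning tree converging to $i$, so $c_i\geq 1$.

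The main step that requires some care is the standard equivalence between strong connectivity and the existence of a spanning converging tree rooted at every vertex; everything else is a direct application of Proposition~\ref{prop:dependence} and the uniqueness of barycentric coordinates in a simplex.
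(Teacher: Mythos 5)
Your overall route coincides with the paper's: you obtain $\bzero\in P_D$ by normalizing the relation $\sum_{i=1}^n c_i\bv_i=\bzero$ from Proposition~\ref{prop:dependence}, and you reduce the interiority statement to the equivalence ``$c_i>0$ for all $i$ $\Leftrightarrow$ $D$ is strongly connected,'' which is exactly what the paper's (much terser) proof asserts without further argument. Your handling of the barycentric coordinates of $\bzero$ in the simplex and your forward implication (a spanning tree converging to $j$ contains a directed path from every vertex to $j$, so positivity of all $c_i$ forces strong connectivity) are correct.

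The gap is in your construction of a spanning converging tree rooted at $i$ from strong connectivity. Retaining only the \emph{first} edge of an arbitrarily chosen path from an arbitrarily chosen not-yet-included vertex $j$ does not guarantee a tree: the head of that first edge need not lie in the part already built, so your claim that ``each newly added vertex connects to a vertex already known to reach $i$'' can fail, and directed cycles among the retained edges can appear while the root gets disconnected. Concretely, take $D$ on $\{1,2,3\}$ with edges $1\to 2$, $2\to 3$, $3\to 1$, $3\to 2$ (strongly connected) and root $i=1$: pick $j=2$ with the path $2\to 3\to 1$ and retain $(2,3)$; then pick $j=3$ with the path $3\to 2$ and retain $(3,2)$. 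The retained edges form the $2$-cycle on $\{2,3\}$ and vertex $1$ is isolated, which is not a converging tree. The repair is standard: grow the tree by always adjoining an edge whose head already lies in the current tree --- such an edge exists as long as some vertex is missing, because a directed path from any outside vertex to $i$ must at some step cross from outside into the tree --- equivalently, run a graph search from $i$ in the digraph with all edges reversed and keep the traversed edges. With that modification each added vertex really does reach $i$ through retained edges, the subgraph stays a converging tree at every step, and your proof becomes a complete, more detailed version of the paper's argument.
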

\begin{proof}
The first statement is a direct consequence of Proposition~\ref{prop:dependence}. For the second it is enough to note that $D$ is strongly connected if and only if each vertex has at least one spanning converging tree.
\end{proof}

In this setting we prove a formula for the normalized volume of $P_D$. If a polytope $P$ is $n$-dimensional, its \emph{normalized volume} $\Vol(P)$ is defined to be $n!$ times the Euclidean volume of $P$.

\begin{proposition}
\label{prop:volume}
Let $D$ be a digraph with positive complexity. Then its normalized volume equals the complexity of $D$, i.e.
\[
\Vol(P_D) = c(D),
\]
\end{proposition}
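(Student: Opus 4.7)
My plan is to reduce to a full-dimensional computation and then extract the complexity from a single cofactor expansion. Since $P_D$ lies in the hyperplane $\{\bx\in\R^n : \sum x_i=0\}$, I will first pass to a unimodularly equivalent full-dimensional copy $\tilde{P}_D\subset\R^{n-1}$ obtained by deleting the $n$-th column of $L(D)$, as in Example~\ref{ex:first_example}. Denoting the resulting rows by $\tilde\bv_1,\dots,\tilde\bv_n$, we have $\Vol(P_D)=\Vol(\tilde{P}_D)$, and the right-hand side is the absolute value of an $n\times n$ determinant
\[
\Vol(\tilde{P}_D)=\left|\det N\right|,\qquad N\coloneqq \begin{pmatrix} 1 & \tilde\bv_1 \\ \vdots & \vdots \\ 1 & \tilde\bv_n \end{pmatrix}\in \Z^{n\times n}
\]
via the standard formula for the normalized volume of a full-dimensional lattice simplex presented by its vertices.

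I then expand $\det N$ along the first column. The minor attached to row $i$ is exactly the matrix obtained from $L(D)$ by deleting its $i$-th row and $n$-th column, namely $L(D)_{i,n}$, so the Matrix-Tree Theorem (Theorem~\ref{thm:MTT}) identifies it as $\det L(D)_{i,n}=(-1)^{i+n}c_i$. Substituting into the cofactor expansion yields
\[
\det N=\sum_{i=1}^n (-1)^{i+1}\det L(D)_{i,n}=\sum_{i=1}^n (-1)^{2i+n+1}c_i=(-1)^{n+1}\sum_{i=1}^n c_i=(-1)^{n+1}c(D),
\]
and taking absolute values gives $\Vol(P_D)=c(D)$, as claimed.

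The only place where real care is required is the sign bookkeeping: the $(-1)^{i+1}$ produced by the cofactor expansion along the column of ones and the $(-1)^{i+n}$ supplied by the Matrix-Tree Theorem must combine into a single global sign $(-1)^{n+1}$ independent of $i$, which is precisely what makes the $c_i$ add rather than partially cancel. Once this cancellation is checked, the rest of the argument is a routine determinant computation, and I do not anticipate any further obstacle.
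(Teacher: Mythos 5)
Your proof is correct, but it takes a somewhat different route from the paper. The paper first invokes Proposition~\ref{prop:dependence} to know that $\bzero \in P_D$, triangulates $P_D$ into the pyramids $S_i = \conv(\{\bzero\} \cup F_i)$ over the facets, identifies each $S_i$ with a simplex whose normalized volume is $\det L(D)_{i,i}$, and sums $\det L(D)_{i,i} = c_i$ over $i$; only the diagonal-minor part of the Matrix-Tree Theorem is used, and the geometric fact that the origin lies in $P_D$ is what guarantees the volumes add without cancellation. You instead pass to a single full-dimensional copy by deleting the $n$-th column (which the paper itself justifies as a unimodular equivalence), write the normalized volume as one $n \times n$ determinant bordered by a column of ones, and Laplace-expand along that column; the minors are then the off-diagonal matrices $L(D)_{i,n}$, and you need the sign statement $(-1)^{i+n}\det L(D)_{i,n} = c_i$ of Theorem~\ref{thm:MTT} to see that all contributions carry the same global sign $(-1)^{n+1}$. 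Your sign bookkeeping is right, and a pleasant byproduct is that your argument never uses $\bzero \in P_D$ (nor Proposition~\ref{prop:dependence}), whereas the paper's argument is more geometric and makes the appearance of the individual $c_i$ as volumes of the pieces $S_i$ visible. In effect your cofactor expansion is the algebraic shadow of the paper's triangulation through the origin, with the Matrix-Tree sign replacing the geometric positivity.
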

\begin{proof}
In this case $P_D$ is a $(n-1)$-simplex by Proposition~\ref{prop:characterization_rank}. For $i=1,\ldots,n$, we denote by $F_i$ the facet of $P_D$ not containing the vertex $i$. By Proposition~\ref{prop:dependence}, $\bzero \in P_D$, so we can triangulate $P_D$ with the $S_i \coloneqq \conv(\{\bzero\} \cup  F_i)$. In particular
\[
\Vol(P_D) = \sum_{i=1}^n \Vol(S_i),
\]
where we consider $\Vol(S_i)=0$ if $S_i$ is not of dimension $n-1$. Let $S'_i$ be the unimodular copy of $S_i$ obtained as the convex hull of the rows of $L(D)_{i,i}$, the matrix obtained from $L(D)$ by removing the $i$-th row and $i$-th column.
\[
\Vol(P_D) = \sum_{i=1}^n \Vol(S_i) = \sum_{i=1}^n \Vol(S'_i) = \sum_{i=1}^n \det L(D)_{i,i} = \sum_{i=1}^n c_i,
\]
where the last equality follows from Theorem~\ref{thm:MTT}.
\end{proof}

\section{Connections with other families of simplices}
\label{sec:3}
Laplacian simplices associated to strongly connected digraphs have interesting intersections with the study of weighted projective space arising from algebraic geometry and with the study of other families of simplices. We use these connections to describe properties of Laplacian simplices with particular attention to reflexivity, the integer decomposition property, and $h^*$-vectors of lattice polytopes. For the convenience of the reader, the next two subsections are a quick introduction to these topics.

\subsection{Weighted projective spaces}
Given positive integers $\lambda_1,\ldots,\lambda_n$ which are coprime, i.e. such that $\gcd \{ \lambda_1 , \ldots , \lambda_n \} = 1$, we define the polynomial algebra $S(\lambda_1 , \ldots , \lambda_n) \coloneqq \C[x_1,\ldots,x_n]$ graded by $\deg x_i \coloneqq \lambda_i$. A \emph{weighted projective space} with weights $\lambda_1 , \ldots , \lambda_n$ is the projective variety $\PP(\lambda_1 , \ldots , \lambda_n) \coloneqq \textrm{Proj}(S(\lambda_1 , \ldots , \lambda_n))$. Since $\PP(\lambda_1 , \ldots , \lambda_n)$ is a toric variety, it corresponds to a fan $\Delta$ that can be characterized as follows. Let $\bv_1,\ldots,\bv_n$ be primitive lattice points which generate the lattice and satisfy $\sum_{i=1}^n \lambda_i \bv_i = \bzero$, where $\gcd \{ \lambda_1 , \ldots , \lambda_n \} = 1$. Then, up to isomorphism, the fan $\Delta$ is the fan whose rays are generated by the $\bv_i$. Note that the fan $\Delta$ identifies uniquely the simplex $S_\Delta \coloneqq \conv (\{ \bv_1, \ldots, \bv_n\})$. With an abuse of terminology, we say that a simplex is the weighted projective space $\PP(\lambda_1 , \ldots , \lambda_n)$ if it is unimodularly equivalent to the simplex $S_\Delta$. For details we refer the reader to \cite{Dol82,Ian00}. 

\subsection{Ehrhart Theory, reflexivity and integer decomposition properties of lattice polytopes}
\label{ssec:ehrhart_theory}
For a proper introduction to Ehrhart Theory and related topics, we refer to the textbook \cite{BR15}. A classical result by Ehrhart states that the number of lattice points in integer dilations of a $d$-dimensional lattice polytope $P\subset \R^d$ behaves polynomially. In terms of generating series this translates into the equality
\[
1 + \sum_{n\ge1} | nP \cap \Z^d | z^n = \frac{h_d^*z^d + \cdots + h_1^*z + 1}{(1-z)^{d+1}},
\]
where $h^*(z) = h_d^*z^d + \cdots + h_1^*z + 1$ is a polynomial with non-negative integer coefficients, called the \emph{$h^*$-polynomial} of $P$. This is an important invariant as it preserves much information about $P$. For example, the following relations are well known:
\[
h^*_1=|P \cap \Z^d|-d-1, \qquad h^*_d=|P^\circ \cap \Z^d|, \qquad 1 + \sum_{i=1}^d h^*_i=\Vol(P),
\]
where $P^\circ$ denotes the relative interior of $P$. The $h^*$-polynomial of $P$ is often identified with the vector of its coefficient $(1,h^*_1,\ldots,h^*_d)$, called the \emph{$h^*$-vector} of $P$. We call a vector $(x_0,x_1,\ldots,x_d)$ \emph{unimodal} if there exists a $1 \leq j \leq d$ such that $x_i \leq x_{i+1}$ for all $0 \leq i < j$ and $x_k \geq x_{k+1}$ for all $j \leq k < d$. An important open problem in Ehrhart Theory is to understand under which conditions $h^*$-vectors are unimodal (see \cite{Bra16} for a survey).

The \emph{dual polytope} $P^*$ of a full dimensional rational polytope $P$ which contains the origin is the polytope $P^* \coloneqq \{ \bx \in \R^d \mid \bx \cdot \by \le 1 \mbox{ for all } \by \in P \}$. If $P$ is a lattice polytope, we call it \emph{reflexive} if its dual $P^*$ is again a lattice polytope. We extend the definition of reflexive to all the lattice polytopes which are unimodular equivalent to $P$. Reflexive polytopes were first introduced in \cite{Bat94}. A well known result of the second author \cite{Hib92} characterizes reflexive polytopes as lattice polytopes having a \emph{symmetric} $h^*$-vector, i.e. such that $h^*_i = h^*_{d-i}$ for $0 \leq i \leq \lfloor \frac{d}{2} \rfloor$.

We say that a lattice polytope $P$ has the \emph{integer decomposition property}, if, for every positive integer $n$ and for all $\bx \in nP \cap \Z^d$ there exist $\bx_1,\ldots,\bx_n \in P \cap \Z^d$ such that $\bx = \bx_1 + \ldots + \bx_n$. A polytope having the integer decomposition property is often called \emph{IDP}.

Many efforts have been made to find sufficient conditions for unimodality. It has been conjectured by Stanley \cite{MR1110850} that a standard graded Cohen-Macaulay integral domain has unimodal $h$-vector. Although this has proven to be wrong in general \cite{MR1310575}, in the context of lattice polytope this can be translated in the following question which remains open: does an IDP polytope always have unimodal $h^*$-vector? A weaker statement of this question has also been suggested by Ohsugi and the second author \cite{OH06}, who conjectured that being reflexive and IDP is a sufficient condition for a lattice polytope to have unimodular $h^*$-vector.

\subsection{Connections with weighted projective spaces and $\Delta_{(1,\bq)}$-simplices}
We now relate Laplacian polytopes to weighted projective spaces.
\begin{proposition}
\label{prop:wps}
Let $D$ be a strongly connected digraph such that $\gcd\{c_1 , \ldots, c_n \}=1$. Then $P_D$ is equivalent to the weighted projective space $\PP(c_1 , \ldots, c_n)$.
\end{proposition}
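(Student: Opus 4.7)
The plan is to verify that the vertices $\bv_1, \ldots, \bv_n$ of $P_D$, viewed as lattice points of the hyperplane lattice $H \cap \Z^n \cong \Z^{n-1}$, fulfil the two conditions that characterise the weighted projective space simplex of $\PP(c_1, \ldots, c_n)$ up to unimodular equivalence: they generate $H \cap \Z^n$, and they satisfy the coprime linear dependence $\sum_{i=1}^n c_i \bv_i = \bzero$. The dependence comes for free from Proposition~\ref{prop:dependence}, coprimality is the hypothesis, and strong connectedness of $D$ guarantees $c_i > 0$ for every $i$ via Corollary~\ref{cor:0ininterior}, so the real content is the lattice-generation statement.

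To prove $\langle \bv_1, \ldots, \bv_n\rangle = H \cap \Z^n$, I would fix an index $i$ and use the coordinate projection $\pi_i : H \to \R^{n-1}$ that forgets the $i$-th entry. Since any $\bx \in H$ satisfies $x_i = -\sum_{j \ne i} x_j$, this projection restricts to a $\Z$-module isomorphism $H \cap \Z^n \xrightarrow{\sim} \Z^{n-1}$. Under $\pi_i$ the set $\{\bv_j : j \ne i\}$ is sent precisely to the rows of the submatrix $L(D)_{i,i}$, and by the Matrix-Tree Theorem (Theorem~\ref{thm:MTT}) these rows generate a sublattice of $\Z^{n-1}$ of index $|\det L(D)_{i,i}| = c_i$. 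Consequently $\langle \bv_j : j \ne i\rangle$ has index $c_i$ in $H \cap \Z^n$, and since this holds for every $i$ the larger sublattice $\langle \bv_1, \ldots, \bv_n\rangle$ has index dividing $\gcd\{c_1, \ldots, c_n\} = 1$, hence equals $H \cap \Z^n$.

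The promised unimodular equivalence with the simplex of $\PP(c_1, \ldots, c_n)$ then reduces to a standard uniqueness argument: the surjection $\phi : \Z^n \twoheadrightarrow H \cap \Z^n$ defined by $\be_i \mapsto \bv_i$ has rank-one kernel, whose primitive generator is forced by the coprime relation $\sum c_i \bv_i = \bzero$ to equal $\pm(c_1, \ldots, c_n)$; running the same analysis on any other lattice generators $\bv_1', \ldots, \bv_n'$ subject to the same coprime relation produces the same kernel, and the induced map $\bv_i \mapsto \bv_i'$ is then a lattice isomorphism matching vertex with vertex. I do not anticipate a genuine obstacle, but one minor subtlety is worth flagging: the individual $\bv_i$ need not be primitive in $H \cap \Z^n$ -- a single out-edge at $i$ of multiplicity $m > 1$ already forces $m$ to divide $\bv_i$ -- yet this does not interfere, since the uniqueness of $S_\Delta$ up to unimodular equivalence only uses that the collection $\{\bv_i\}$ generates the lattice, not primitivity of each vertex.
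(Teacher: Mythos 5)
Your proof is correct and follows essentially the same route as the paper's: both arguments hinge on the fact that the sublattice generated by $\{\bv_j : j \neq i\}$ has index $c_i = \det L(D)_{i,i}$ in $H \cap \Z^n$ (via the Matrix-Tree Theorem), so the index of $\langle \bv_1,\ldots,\bv_n\rangle$ divides $\gcd\{c_1,\ldots,c_n\}=1$ and the vertices generate the lattice. The paper merely asserts the index computation and leaves the uniqueness of the weighted projective space simplex (and the primitivity caveat you flag) implicit in its definition, so your extra steps are added detail rather than a different argument.
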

\begin{proof}
By Corollary~\ref{cor:0ininterior}, $\sum_{i=1}^n \lambda_i \bv_i = \bzero$; so we just need to prove that the vertices of $P_D$ span the lattice. Let $L$ be the lattice spanned by all the vertices, and $L_i$ the lattice spanned by all the vertices $\bv_j$ such that $j \neq i$. Then we have the following inclusions of subgroups of $\Z^d$: $L_i \subset L \subset \Z^n$. In particular for all $i$, $|\Z^n : L| |L : L_i|=|\Z^n : L_i|=c_i$, which implies that $ L = \Z^n$.
\end{proof}

In \cite{Con02,Kas13} characterizations for properties of weighted projective spaces are given in terms of their weights and are used to perform classifications. We use these results to translate properties of $D$ to properties of $P_D$. Motivated by the open questions mentioned in the previous subsection, we focus on reflexivity, integer decomposition property, and description of the $h^*$-polynomial.  

We use the following result of Conrads, presented below in a slightly weaker form.

\begin{proposition}[{\cite[Proposition~5.1]{Con02}}]
\label{prop:conrads}
Let $S  = \conv (\bv_1,\ldots,\bv_n)$ be an $(n-1)$-simplex such that $\sum_{i=1}^n q_i \bv_i = \bzero$ for some positive integers $q_1,\ldots,q_n$ satisfying $\gcd(q_1,\ldots,q_n)=1$. Then $S$ is reflexive if and only if
\begin{equation}
\label{eq:conrads}
q_i \quad \text{divides the total weight} \quad |Q|=\sum\limits_{j=1}^{n} q_j \quad for \quad i=1,\ldots,n.
\end{equation}
%Conversely, if $q_1,\ldots,q_n \in \N$ are given such that \eqref{eq:conrads} holds, then the weighted projective space $\mathbb{P}(q_1,\ldots,q_n)$ has at most Gorenstein singularities. 
\end{proposition}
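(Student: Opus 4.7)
The plan is to dualize and argue facet-by-facet. For each $j \in [n]$ let $F_j = \conv\{\bv_i : i \neq j\}$ be the facet opposite $\bv_j$, let $\bu_j$ be the primitive inward lattice normal to $F_j$, and let $a_j > 0$ be the lattice distance from $\bzero$ to $F_j$, so that $\langle \bu_j, \bv_i\rangle = -a_j$ for $i \neq j$; set $b_j \coloneqq \langle \bu_j, \bv_j\rangle$. By definition, $S$ is reflexive precisely when $a_j = 1$ for every $j$. The first step is to evaluate $\bu_j$ on the barycentric identity $\bzero = \sum_i (q_i/|Q|)\bv_i$, which is the unique such expression thanks to $\gcd(q_1,\ldots,q_n)=1$; clearing denominators gives the key equation
\[
q_j(a_j + b_j) \;=\; |Q|\, a_j.
\]

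The crux of the proof is the auxiliary claim $\gcd(a_j, b_j) = 1$. This uses the standing assumption---implicit here and verified for Laplacian simplices in the proof of Proposition~\ref{prop:wps}---that the vertices $\bv_1, \ldots, \bv_n$ generate the ambient lattice $\Lambda$ of the affine hull of $S$. Since $\bu_j$ is primitive in $\Lambda^*$ it surjects onto $\Z$, so its values on any $\Z$-generating set of $\Lambda$ must generate $\Z$. But its values on the $\bv_i$ are $\{-a_j, \ldots, -a_j, b_j\}$, whose $\Z$-span is $\gcd(a_j,b_j)\Z$; therefore $\gcd(a_j,b_j)=1$.

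With these two ingredients the equivalence follows easily. Rearranging the key equation gives $a_j \mid q_j b_j$, and $\gcd(a_j,b_j)=1$ then forces $a_j \mid q_j$; writing $q_j = a_j q'_j$ reduces the identity to $q'_j(a_j+b_j) = |Q|$. Consequently $q_j \mid |Q|$ is equivalent to $a_j \mid a_j + b_j$, i.e.\ $a_j \mid b_j$, which by $\gcd(a_j,b_j)=1$ is in turn equivalent to $a_j = 1$. Taking the conjunction over all $j$ matches reflexivity with the divisibility condition in the statement. I expect the auxiliary $\gcd(a_j, b_j)=1$ step to be the main obstacle: it silently requires that the vertices generate the ambient lattice, a hypothesis that every application of the proposition---including the one to $P_D$ via Proposition~\ref{prop:wps}---must first verify.
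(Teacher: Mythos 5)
The paper does not prove this proposition: it is quoted, in slightly weakened form, from \cite[Proposition~5.1]{Con02}, so there is no internal proof to compare against. Your argument is a correct, self-contained derivation and is essentially the standard one: pair the primitive inward facet normal $\bu_j$ with the relation $\sum_i q_i\bv_i=\bzero$ to obtain $q_j(a_j+b_j)=|Q|\,a_j$, and use $\gcd(a_j,b_j)=1$ to turn $q_j\mid |Q|$ into $a_j=1$ facet by facet; all the divisibility manipulations check out. Your warning about the hidden hypothesis is the substantive point, and it is accurate: as literally stated the proposition fails without assuming that $\bv_1,\ldots,\bv_n$ generate the lattice of the affine hull. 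For instance $S=\conv\{(2),(-2)\}=[-2,2]\subset\R$ with $q_1=q_2=1$ satisfies $q_i\mid|Q|=2$ for all $i$ yet is not reflexive, which also shows that $\gcd(q_1,\ldots,q_n)=1$ alone does not imply the spanning property. In Conrads' framework the simplex attached to a weight system has lattice-spanning vertices by construction, and in this paper every application supplies the hypothesis: Proposition~\ref{prop:wps} proves precisely that the vertices of $P_D$ span the lattice when $D$ is strongly connected and $\gcd\{c_1,\ldots,c_n\}=1$ (which is what makes Corollary~\ref{cor:refl} legitimate), and the simplices $\Delta_{(1,\bq)}$ have the standard basis vectors among their vertices. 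Two cosmetic remarks: the uniqueness of the barycentric expression $\bzero=\sum_i (q_i/|Q|)\bv_i$ has nothing to do with the gcd condition (barycentric coordinates with respect to a simplex are always unique; the gcd merely normalizes the integer relation), and you should state explicitly that reflexivity is taken with respect to the lattice $\Z^d\cap\mathrm{aff}(S)$, i.e. for a full-dimensional unimodular copy, which is the paper's convention.
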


From this we can derive the following corollary.

\begin{corollary}
\label{cor:refl}
Let $D$ be a strongly connected digraph such that $\gcd\{c_1 , \ldots, c_n \}=1$. Then $P_D$ is reflexive if and only if $c_i | c(D)$ for all $i$.
\end{corollary}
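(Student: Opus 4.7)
The plan is to reduce directly to Conrads' criterion (Proposition~\ref{prop:conrads}), which is tailor-made for this situation. First, since $D$ is strongly connected it has positive complexity, so by Proposition~\ref{prop:characterization_rank} the polytope $P_D$ is an $(n-1)$-simplex with vertices $\bv_1,\ldots,\bv_n$. By Corollary~\ref{cor:0ininterior}, strong connectivity guarantees at least one spanning converging tree rooted at every vertex, so each $c_i$ is strictly positive; and by hypothesis $\gcd(c_1,\ldots,c_n)=1$. Finally, Proposition~\ref{prop:dependence} supplies the linear dependence $\sum_{i=1}^n c_i \bv_i = \bzero$. These are exactly the data needed to invoke Conrads' criterion with weights $q_i = c_i$.

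Next I would pass to a full-dimensional unimodular model of $P_D$, since Proposition~\ref{prop:conrads} is stated for a full-dimensional simplex. This is handled by Proposition~\ref{prop:wps}: under our hypotheses $P_D$ is unimodularly equivalent to the weighted projective space simplex $\PP(c_1,\ldots,c_n)$, and its vertices inherit the same positive linear dependence $\sum_{i=1}^n c_i \bv_i = \bzero$ with identical weights. Since reflexivity has been extended to the unimodular equivalence class (see Section~\ref{ssec:ehrhart_theory}), it suffices to verify reflexivity on the full-dimensional copy.

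Applying Proposition~\ref{prop:conrads} with $q_i = c_i$ then gives the desired equivalence immediately: $P_D$ is reflexive if and only if $c_i$ divides the total weight $|Q| = \sum_{j=1}^n c_j = c(D)$ for every $i \in [n]$.

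The only potential obstacle is bookkeeping: one must check that the linear dependence transports correctly when moving from $P_D$ (lying in the hyperplane $\sum x_i = 0$) to its full-dimensional image, and that the $c_i$ remain the coefficients of the dependence. Both facts are built into Proposition~\ref{prop:wps}, so I expect no real difficulty — the corollary is essentially a direct translation of Conrads' criterion through the dictionary provided by the Matrix-Tree Theorem.
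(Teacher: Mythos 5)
Your argument is correct and is essentially the paper's: the corollary is stated there as an immediate consequence of Proposition~\ref{prop:conrads} applied with weights $q_i = c_i$, using the dependence $\sum_{i=1}^n c_i \bv_i = \bzero$ from Proposition~\ref{prop:dependence} and the weighted projective space structure from Proposition~\ref{prop:wps}. Your extra bookkeeping (positivity of the $c_i$ via strong connectivity, transporting the dependence to the full-dimensional model) just makes explicit what the paper leaves implicit.
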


Proposition~\ref{prop:conrads} is also used by Braun--Davis--Solus \cite{BDS16} to define an interesting class of reflexive simplices. In particular they are interested in studying the integer decomposition property and unimodality of the $h^*$-vectors of such simplices constructed the following way.
Let $\bq=(q_1,\ldots,q_n)$ be an nondecreasing sequence of positive integers satisfying the condition $q_j | (1+\sum_{i \neq j}q_i )$
for all $j=1,\ldots,n$. For such a vector $\bq$, the simplex $\Delta_{(1,\bq)}$ is defined as
\[
	\Delta_{(1,\bq)}:=\conv \left\{ \be_1,\be_2,\ldots,\be_n,-\sum_{i=1}^n q_i \be_i \right\}, 
\]
where $e_i\in \R^n$ is the $i$-th standard basis vector. By Proposition~\ref{prop:conrads}, $\Delta_{(1,\bq)}$ is a reflexive simplex. Note that $\Delta_{(1,\bq)}$ is equivalent to the weighted projective space with weights $(1,q_1,\ldots,q_n)$.

The next proposition shows that the simplices $\Delta_{(1,\bq)}$ are a subfamily of Laplacian simplices arising from special star-shaped strongly connected digraphs.  

\begin{proposition}
\label{prop:Delta_1q}
Let $\bq=(q_1,q_2,\ldots,q_n)$ be any nondecreasing sequence of positive integers. Then there is a strongly connected digraph $D$ such that $P_D \cong \PP(1,q_1,\ldots,q_n)$.
In particular,  if $\bq$ satisfies the condition $q_j | (1+\sum_{i \neq j}q_i )$
for all $j=1,\ldots,n$, then $P_D \cong \Delta_{(1,\bq)}$.
\end{proposition}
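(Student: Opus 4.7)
The plan is to exhibit an explicit star-shaped digraph $D$ on $n+1$ vertices whose converging-tree counts are exactly $(1,q_1,\ldots,q_n)$, and then apply Proposition~\ref{prop:wps}. Since the weight vector has a $1$ in the first coordinate, the gcd condition of that proposition is automatic, so the whole problem reduces to realizing the prescribed $c_i$'s by some strongly connected $D$.

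Concretely, I would take $V(D) = \{0,1,\ldots,n\}$ and, for each $i \in \{1,\ldots,n\}$, include a single directed edge $i \to 0$ together with $q_i$ parallel directed edges $0 \to i$. Strong connectivity is immediate, since for any $i,j \in \{1,\ldots,n\}$ the path $i \to 0 \to j$ lies in $D$. To count spanning converging trees I would argue as follows. For $c_0$, each non-root vertex $i \geq 1$ has a unique outgoing edge in $D$ (namely $i \to 0$), so the converging tree rooted at $0$ is forced and $c_0 = 1$. For $c_j$ with $j \geq 1$, each $i \in \{1,\ldots,n\} \setminus \{j\}$ is again forced to use $i \to 0$, while vertex $0$ must select some outgoing edge $0 \to k$; any choice with $k \neq j$ produces the $2$-cycle $0 \to k \to 0$ and hence no converging tree. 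Thus $0$ must use one of the $q_j$ parallel edges $0 \to j$, giving $c_j = q_j$.

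With the counts in hand, $\gcd\{c_0,c_1,\ldots,c_n\} = \gcd\{1,q_1,\ldots,q_n\} = 1$, so Proposition~\ref{prop:wps} yields $P_D \cong \PP(1,q_1,\ldots,q_n)$, proving the first claim. The second claim then follows from the paragraph immediately preceding the statement, which records that $\Delta_{(1,\bq)}$ is itself unimodularly equivalent to $\PP(1,q_1,\ldots,q_n)$ whenever $\bq$ satisfies the divisibility hypothesis.

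The main, and quite mild, obstacle is the combinatorial verification of the converging-tree counts. The key observation making this smooth is that every peripheral vertex $i \in \{1,\ldots,n\}$ has a single outgoing arc (namely to the hub $0$), so in any converging tree only the hub has a genuine choice of outgoing edge; the avoidance of $2$-cycles then pins this choice down to a multi-edge $0 \to j$ and turns the count into exactly the multiplicity $q_j$.
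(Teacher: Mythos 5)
Your construction is exactly the paper's: the same star-shaped digraph with a hub, $q_i$ parallel edges from the hub to the $i$-th peripheral vertex and a single return edge, followed by the same appeal to Proposition~\ref{prop:wps}. The only difference is that you spell out the converging-tree counts $c_0=1$, $c_j=q_j$, which the paper leaves as ``easy to verify,'' and your verification is correct.
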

\begin{proof}
As in Figure~\ref{fig:star_shaped}, we define $D$ as the star-shaped digraph on the vertices $1,\ldots,n+1$ such that
\begin{enumerate}
\item for $i=1,\ldots,n$ there are $q_i$ many edges directed from $1$ to $i+1$;
\item for $i=1,\ldots,n$ there is one edge directed from $i+1$ to $1$.
\end{enumerate} 
It is easy to verify that $c_1=1$ and, for $i \geq 2$, $c_i = q_i$. Proposition~\ref{prop:wps} concludes the proof.
\end{proof}

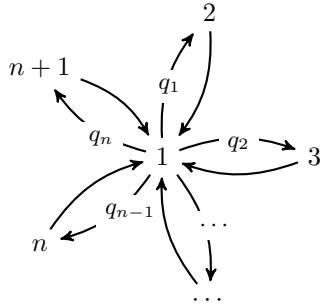
\begin{figure}[h]
\begin{tikzpicture}[->,>=stealth',shorten >=1pt,auto,node distance=2cm,
  thick,main node/.style={}]

  \node[main node] at (0:0)	(1) {$1$};
  \node[main node] at (72:2)	(2) {$2$};
  \node[main node] at (0:2)	(3) {$3$};
  \node[main node] at (288:2)	(4) {$\cdots$};
  \node[main node] at (216:2)	(5) {$n$};
  \node[main node] at (144:2)	(6) {$n+1$};

  \path[every node/.style={font=\sffamily\small,fill=white}]
    (1) edge [bend left=20] node[xshift=.3cm,yshift=-.3cm] {$q_1$} (2)
		edge [bend left=20] node[xshift=.0cm,yshift=-.3cm] {$q_2$} (3)
		edge [bend left=20] node[xshift=-.2cm,yshift=-.2cm] {$\cdots$} (4)
		edge [bend left=20] node[xshift=-.2cm,yshift=.3cm] {$q_{n-1}$} (5)
		edge [bend left=20] node[xshift=.4cm,yshift=.1cm] {$q_{n}$} (6)
    (2) edge [bend left=20] (1)
    (3) edge [bend left=20] (1)
    (4) edge [bend left=20] (1)
    (5) edge [bend left=20] (1)
    (6) edge [bend left=20] (1);

\end{tikzpicture}
\caption{The star shaped digraph $D$ such that $P_D=\PP(1,q_1, \ldots , q_n)$. The label on an edge from $i$ to $j$ represent the total number of edges from $i$ to $j$.}
\label{fig:star_shaped}
\end{figure}

In \cite{BDS16} an explicit formula for the $h^*$-polynomial of the simplices $\Delta_{(1,\bq)}$ is given. We remark that such formula can be also extracted from \cite{Kas13}, where it is proved in the more general setting of weighted projective spaces; however, the formulation given in \cite{BDS16} perfectly fits our needs.

\begin{theorem}[{\cite[Theorem~2.5]{BDS16}}]
\label{thm:hstarq}
The $h^*$-polynomial of $\Delta_{(1,\bq)}$ is 
\[
h^*(\Delta_{(1,\bq)};z) = \sum_{b=0}^{q_1+\cdots +q_n}z^{w(b)}
\]
where 
\[
w(b):=b-\sum_{i=1}^n\left\lfloor\frac{q_ib}{1+q_1+\cdots +q_n} \right\rfloor \, .
\]
\end{theorem}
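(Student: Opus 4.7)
The plan is to use the standard description of the $h^*$-polynomial of a lattice simplex as a lattice-point generating function over the fundamental half-open parallelepiped of the cone over the simplex. Writing $W \coloneqq 1 + q_1 + \cdots + q_n$ and $q_0 \coloneqq 1$, lift the vertices of $\Delta_{(1,\bq)}$ to height one in $\R^{n+1}$ by setting
\[
\tilde{\bv}_0 \coloneqq \left(-\sum_{i=1}^{n} q_i \be_i,\ 1\right), \qquad \tilde{\bv}_i \coloneqq (\be_i, 1) \text{ for } i=1,\ldots,n,
\]
and consider the half-open parallelepiped $\Pi \coloneqq \bigl\{\sum_{i=0}^{n} \lambda_i \tilde{\bv}_i : 0 \le \lambda_i < 1\bigr\}$. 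It is a classical fact that
\[
h^*(\Delta_{(1,\bq)}; z) = \sum_{\bp \in \Pi \cap \Z^{n+1}} z^{\mathrm{ht}(\bp)},
\]
where $\mathrm{ht}(\bp)$ denotes the last coordinate of $\bp$.

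The first step is to parametrize $\Pi \cap \Z^{n+1}$ by the residues $b \in \{0,1,\ldots,W-1\}$. For each such $b$, set $\lambda_i \coloneqq \{bq_i / W\}$ for $i=0,1,\ldots,n$, where $\{\cdot\}$ denotes the fractional part, and define $\bp_b \coloneqq \sum_{i=0}^{n} \lambda_i \tilde{\bv}_i$. The $j$-th spatial coordinate of $\bp_b$ equals $\lambda_j - q_j \lambda_0 = \{bq_j/W\} - q_j \{b/W\} = q_j \lfloor b/W \rfloor - \lfloor bq_j/W \rfloor \in \Z$, so $\bp_b \in \Z^{n+1}$; and by construction $\bp_b \in \Pi$.

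The second step is to verify that $b \mapsto \bp_b$ is a bijection onto $\Pi \cap \Z^{n+1}$. Injectivity follows because $\lambda_0 = b/W$ for $b \in \{0,\ldots,W-1\}$ already determines $b$. For surjectivity, note that by Proposition~\ref{prop:volume} (applied to the star-shaped digraph of Proposition~\ref{prop:Delta_1q}) the normalized volume of $\Delta_{(1,\bq)}$ equals $W$, and this coincides with $|\Pi \cap \Z^{n+1}|$; so the $W$ distinct points $\bp_b$ exhaust $\Pi \cap \Z^{n+1}$.

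The third and final step is to compute the height of $\bp_b$:
\[
\mathrm{ht}(\bp_b) = \sum_{i=0}^{n} \lambda_i = \sum_{i=0}^{n} \left(\frac{bq_i}{W} - \left\lfloor \frac{bq_i}{W} \right\rfloor\right) = \frac{b W}{W} - \sum_{i=1}^{n} \left\lfloor \frac{bq_i}{W} \right\rfloor = w(b),
\]
where the $i=0$ floor term vanishes since $0 \le b < W$. Summing $z^{w(b)}$ over $b=0,\ldots,W-1 = q_1 + \cdots + q_n$ yields the claimed formula.

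The only delicate point is the surjectivity in the second step, i.e.\ accounting for all lattice cosets of the sublattice generated by $\tilde{\bv}_0,\ldots,\tilde{\bv}_n$; I expect this to be the main technical ingredient, but it reduces cleanly to the volume computation via Proposition~\ref{prop:volume}, so no serious number-theoretic work is required.
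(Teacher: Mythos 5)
Your argument is correct: the classical half-open fundamental-parallelepiped description of the $h^*$-polynomial, the parametrization of its lattice points by $b\in\{0,\ldots,W-1\}$ (with $W=1+q_1+\cdots+q_n$) via $\lambda_i=\{bq_i/W\}$, the integrality check, and the height computation yielding $w(b)$ all hold, and the injectivity-plus-cardinality argument does give the required bijection. Note that the paper contains no proof of this statement---it is quoted from \cite[Theorem~2.5]{BDS16}---and your proof is essentially the standard argument used in that source; the only stylistic point is that the normalized volume $W$ of $\Delta_{(1,\bq)}$ follows from a one-line determinant computation on the lifted vertices, so the detour through Propositions~\ref{prop:volume} and~\ref{prop:Delta_1q} (which, as stated in the paper, identifies $P_D$ with $\Delta_{(1,\bq)}$ only under the divisibility hypothesis in the definition of $\Delta_{(1,\bq)}$) is valid but unnecessary.
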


Finally, in \cite{BDS16}, necessary conditions for a $\Delta_{(1,\bq)}$ simplex to be IDP are given.

\begin{lemma}[{\cite[Corollary 2.7]{BDS16}}]
\label{lem:BDS_IDP}
If $\Delta_{(1,\bq)}$ is IDP, then for all $j=1,2,\ldots,n$
\[
\frac{1}{q_j}+\sum_{i\neq j}\left\{\frac{q_i}{q_j}\right\}=1.
\]
\end{lemma}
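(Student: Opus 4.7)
The plan is to extract the identity from the failure of decomposition at a very specific lattice point. Set $L := 1 + \sum_i q_i$; reflexivity of $\Delta_{(1,\bq)}$ gives $q_j \mid L$, so $k_j := L/q_j$ is a positive integer. First I would verify that $-\be_j$ lies in $k_j \Delta_{(1,\bq)}$ and in fact on the facet $F_j$ opposite $\be_j$: all facet inequalities of $k_j \Delta_{(1,\bq)}$ are satisfied, and the one associated to $F_j$, namely $\tfrac{L}{q_j}x_j - \sum_{i \neq j} x_i \ge -k_j$, attains equality at $-\be_j$.

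Assume IDP. Then $-\be_j = p_1 + \cdots + p_{k_j}$ for some lattice points $p_m \in \Delta_{(1,\bq)} \cap \Z^n$. Each $p_m$ satisfies $\tfrac{L}{q_j}(p_m)_j - \sum_{i \neq j}(p_m)_i \ge -1$; summing these $k_j$ inequalities reproduces the facet inequality for $-\be_j$ as an equality, so every $p_m$ must lie on $F_j$. Parameterizing lattice points of $F_j$ as $(t/q_j)(-\sum_i q_i \be_i) + \sum_{i \neq j} \alpha_i \be_i$ with $t \in \Z_{\ge 0}$ and $\alpha_i \ge 0$, the $j$-th coordinate of such a point is $-t$. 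Since the $j$-th coordinates of the $p_m$'s are non-positive integers summing to $-1$, exactly one $p_m$ must have $t = 1$.

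It then remains to characterize when a lattice point of $F_j$ with $j$-th coordinate $-1$ exists. In the slice $t = 1$, integrality of the $i$-th coordinate forces $\alpha_i = a_i + \{q_i/q_j\}$ for some $a_i \in \Z_{\ge 0}$, and the affine constraint $\sum_{i \neq j} \alpha_i = 1 - \tfrac{1}{q_j}$ rewrites as $\sum_{i \neq j} a_i = 1 - \tfrac{1}{q_j} - \sum_{i \neq j}\{q_i/q_j\}$. Reflexivity gives $\sum_{i \neq j} q_i \equiv -1 \pmod{q_j}$, so $\sum_{i \neq j}\{q_i/q_j\} = (1 - \tfrac{1}{q_j}) + K$ for some $K \in \Z_{\ge 0}$, and the equation becomes $\sum a_i = -K$, which admits a non-negative integer solution if and only if $K = 0$. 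This equality $K = 0$ is precisely the identity to be proved.

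The main obstacle is the last step — the clean parameterization of lattice points on $F_j$ in the $t = 1$ slice together with the observation that the sum of fractional parts is congruent to $1 - \tfrac{1}{q_j}$ modulo $1$ exactly because of reflexivity. Once this is in place, IDP forces $K = 0$ for every $j$, yielding the lemma.
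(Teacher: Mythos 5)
The paper does not prove this lemma at all --- it is imported verbatim from \cite[Corollary~2.7]{BDS16} --- so your attempt is measured only against correctness, and there is a concrete error at its core. The facet of $\Delta_{(1,\bq)}$ opposite $\be_j$ is cut out by $\frac{1+\sum_{i\neq j}q_i}{q_j}\,x_j-\sum_{i\neq j}x_i\ge -1$, i.e.\ the coefficient of $x_j$ is $k_j-1=(L-q_j)/q_j$, not $L/q_j$: the inequality you wrote is violated already at the vertex $-\sum_i q_i\be_i$ of $\Delta_{(1,\bq)}$, so it is not a supporting hyperplane. As a consequence, $-\be_j$ is \emph{not} on the boundary of $k_j\Delta_{(1,\bq)}$; its barycentric coordinates there are $\lambda_0=(L+q_j)/L^2$, $\lambda_j=q_j^2/L^2$, $\lambda_i=q_i(L+q_j)/L^2$, all strictly positive, so it is an interior point. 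Hence writing $-\be_j$ as a sum of $k_j$ lattice points of $\Delta_{(1,\bq)}$ does \emph{not} force the summands onto $F_j$, and the entire slice analysis that follows has nothing to grab onto: the step ``summing the inequalities reproduces the facet equality'' fails because neither the inequality nor the equality is true.

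The good news is that your strategy is sound and the error is an off-by-one in the dilation factor. The smallest dilate containing $-\be_j$ is $(k_j-1)\Delta_{(1,\bq)}$, where $k_j-1=(1+\sum_{i\neq j}q_i)/q_j$, and there $-\be_j$ does lie on the facet corresponding to $F_j$ (the correct functional $(k_j-1)x_j-\sum_{i\neq j}x_i$ takes the value $-(k_j-1)$ at $-\be_j$ and is $\ge -1$ on $\Delta_{(1,\bq)}$). Running your argument with $m=k_j-1$ summands, the averaging step does force every summand onto $F_j$; your parameterization of lattice points of $F_j$ by $t\in\Z_{\ge 0}$ (the $j$-th coordinate being $-t$) is correct, exactly one summand has $t=1$, and your final arithmetic --- $\sum_{i\neq j}\alpha_i=1-\frac{1}{q_j}$, $\alpha_i=a_i+\{q_i/q_j\}$ with $a_i\in\Z_{\ge0}$, and the congruence $\sum_{i\neq j}q_i\equiv-1\pmod{q_j}$ forcing $K=0$ --- is fine and yields the stated identity. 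So with $k_j$ replaced by $k_j-1$ and the facet functional corrected, you obtain a valid, self-contained geometric proof of the cited result; as written, however, the decomposition point is interior and the proof does not go through.
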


\section{Laplacian simplices associated to cycle digraphs}
\label{sec:4}
We now want to extend the study of Braun--Meyer on simplices associated to cycle graphs. They show that the Laplacian simplex associated to a cycle is reflexive if and only if the cycle has odd length $n$; in that case it has a unimodal $h^*$-vector and fails to be IDP for $n \ge 5$ \cite[Section~5]{BM17}. We generalize their study by extending the notion of cycle graphs to cycle digraphs. 
A natural way to do it, would be to consider digraphs whose underlying simple graphs are cycle graphs. Here by underlying simple graph $G_D$ of a digraph $D$ we mean the simple undirected graph on the vertex set $V(G_D) \coloneqq V(D)$ such that the edge $\{i,j\}$ is in $E(G_D)$ if and only if there is at least one directed edge between $i$ and $j$ in $D$ (in either of the two directions). But since we are interested in reflexivity, we know, by Corollary~\ref{cor:0ininterior}, that $D$ has to be strongly connected, therefore $D$ needs to contain a cycle entirely oriented in one of the two possible directions. This generalization of cycle graphs will be made clear later (Definition~\ref{def:cycle}). Moreover, in order to ensure the presence of no more than one interior point, we will assume that $D$ is a simple digraph. In the next subsection we show that this is always true for any simple graph.

\subsection{Laplacian simplices associated to simple digraphs} In this section we focus on \emph{simple} digraphs, where by simple we mean there is at most one directed edge from $i$ to $j$, for any pair of vertices $i,j \in [n]$, $i \neq j$. Note that the presence of both a directed edge from $i$ to $j$ and one from $j$ to $i$ is allowed.  As in the previous section, we restrict our attention to those digraphs having positive complexity. Note this case still generalizes the work of Braun--Meyer \cite{BM17} (see Remark~\ref{rmk:BM}) and defines polytopes with at most one interior point. Indeed we prove that all the Laplacian polytopes of a simple directed graph on $n$ vertices are subpolytopes of $P_{K_n}$, the Laplacian simplex associated to the complete simple digraph. Observe $P_{K_n}$ is equivalent to the $n$-th dilation of an $(n-1)$-dimensional unimodular simplex, and therefore it has exactly one interior lattice point.

\begin{proposition}
\label{prop:simple}
Let $D$ be a simple digraph on $n$ vertices. Then $P_D$ is a subpolytope of $P_{K_n}$. In particular, if $D$ is strongly connected, then $P_D$ has exactly one interior lattice point.
\end{proposition}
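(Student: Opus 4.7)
The plan has two parts: first establish the containment $P_D \subseteq P_{K_n}$ by computing $P_{K_n}$ explicitly, then use that containment (with a small extra argument) to pin down the interior lattice points.

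For the containment, I would directly compute the Laplacian of the complete simple digraph: the diagonal entry is $\outdeg(i)=n-1$ and every off-diagonal entry is $-1$, so the $i$-th row equals $n\be_i - \bone$, where $\bone$ denotes the all-ones vector. Hence $P_{K_n} = \conv\{n\be_i - \bone : i \in [n]\}$ is the translated $n$-th dilate of the standard unimodular simplex, and equivalently can be written as the slice description
\[
P_{K_n} = \Bigl\{\bx \in \R^n : \textstyle\sum_{k} x_k = 0, \; x_k \geq -1 \text{ for all } k \Bigr\}.
\]
Now let $D$ be any simple digraph. The $i$-th row $\bv_i$ of $L(D)$ has diagonal entry $\outdeg(i) \geq 0$ and off-diagonal entries $-a_{i,k} \in \{-1,0\}$ (because $D$ is simple), while summing to $0$. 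So every vertex of $P_D$ satisfies the defining inequalities of $P_{K_n}$, and by convexity $P_D \subseteq P_{K_n}$.

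For the second assertion, strong connectivity implies positive complexity, so Proposition~\ref{prop:characterization_rank} tells me $P_D$ is an $(n-1)$-simplex and Corollary~\ref{cor:0ininterior} gives $\bzero \in P_D^\circ$. To show the origin is the \emph{only} interior lattice point, suppose $\by \in P_D^\circ \cap \Z^n$. Because $P_D$ is a simplex, $\by$ has a unique barycentric expansion $\by = \sum_{i=1}^n \mu_i \bv_i$ with $\mu_i > 0$ and $\sum \mu_i = 1$. For each coordinate,
\[
y_k \;=\; \mu_k \outdeg(k) - \sum_{i \neq k} \mu_i\, a_{i,k} \;\geq\; 0 - \sum_{i \neq k} \mu_i \;=\; \mu_k - 1 \;>\; -1,
\]
where the first inequality uses $a_{i,k} \in \{0,1\}$ (simpleness) and $\outdeg(k) \geq 0$, and the last is strict because $\mu_k > 0$. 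Integrality then forces $y_k \geq 0$ for every $k$, and combined with $\sum_k y_k = 0$ this yields $\by = \bzero$.

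The one step that needs real care is the strict inequality $y_k > -1$: it depends simultaneously on simpleness (to control $a_{i,k}$ from above) and on the strict positivity of \emph{every} barycentric coordinate, which in turn requires $P_D$ to be a genuine full-dimensional simplex. Once that inequality is in hand, the rest is bookkeeping with integrality and the hyperplane equation $\sum x_k = 0$.
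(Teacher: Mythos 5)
Your proof is correct, and it takes a somewhat different route from the paper's. For the containment, the paper works with the vertex description of $P_{K_n}$: after normalizing a row of $L(D)$ to the form $(a,-1,\ldots,-1,0,\ldots,0)$ it exhibits explicit barycentric coordinates $\lambda_1=\tfrac{a+1}{n}$, $\lambda_i=0$ or $\tfrac1n$ expressing that row as a convex combination of the rows of $L(K_n)$; you instead use the facet description $P_{K_n}=\{\bx : \sum_k x_k=0,\ x_k\ge -1 \text{ for all }k\}$ and simply check these inequalities on each row of $L(D)$, which is arguably the more economical verification. For the second statement, the paper deduces it from the containment together with the observation (made just before the proposition) that $P_{K_n}$ is a translate of the $n$-th dilate of a unimodular simplex and therefore has the origin as its unique interior lattice point, implicitly using that $P_D$ and $P_{K_n}$ share the affine hull so relative interiors are nested; you argue directly from the barycentric expansion of an interior lattice point $\by$ of the simplex $P_D$, getting $y_k\ge \mu_k-1>-1$, hence $y_k\ge 0$ by integrality and $\by=\bzero$ from $\sum_k y_k=0$. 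Your uniqueness argument does not in fact use the containment at all (it re-proves the bound $x_k>-1$ on the relative interior of $P_D$ from simpleness), which makes the ``in particular'' part self-contained, whereas the paper gets it for free from the structure of $P_{K_n}$ once the containment is in place. You also correctly flagged the one delicate point: strict positivity of every barycentric coordinate requires $P_D$ to be a genuine $(n-1)$-simplex, which strong connectivity guarantees via Proposition~\ref{prop:characterization_rank} and Corollary~\ref{cor:0ininterior}.
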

\begin{proof}
Corollary~\ref{cor:0ininterior} implies that $P_D$ has at least one interior lattice point, so the second statement follows directly from the first one. In order to prove the first part, we show that any vertex $\bu$ of $P_D$ is in $P_{K_n}$. Up to a relabeling of the vertices, we can assume that $\bu=(a,-1,\ldots,-1,0,\ldots,0)$, where $a$ equals the number entries of $u$ which are equal to $-1$. We know that the Laplacian $L(K_n)$ is
\[
L(K_n) =
\begin{bmatrix}
    n-1 & -1  & \dots  & -1 \\
    -1 & n-1  & \dots  & -1 \\
    \vdots & \vdots & \ddots & \vdots \\
    -1 & -1 & \dots  & n-1
\end{bmatrix}.
\]
We denote by $\bv_i$ the $i$-th row of $L(K_n)$, as well as the corresponding vertex of $P_{K_n}$. It is then enough to prove that $\bu$ can be written as a convex combination of the vertices of $K_n$, i.e. that $\bu = \sum_{i=0}^n \lambda_i \bv _i$, with $0 \leq \lambda_i \leq 1$ and $\sum_{i=0}^n \lambda_i = 1$. This can be done with the following choice of barycentric coordinates:
\[
\lambda_i =
\begin{cases}
\frac{a+1}{n}, & \mbox{if } i = 1 \\
0, & \mbox{if } 2 \leq i \leq a+1 \\
\frac{1}{n}, & \mbox{if } a+2 \leq i \leq n
  \end{cases}.
\]
This proves that $P_D$ is a subpolytope of $P_{K_n}$. 
\end{proof}

\subsection{Lattice simplices associated to generalized cycles}

In \cite{BM17} the authors study the Laplacian simplex associated to the undirected cycle graph $C_n$, proving the following result.

\begin{theorem}[{\cite[Theorem 5.1]{BM17}}]
\label{thm:ref_cycles}
For $n \ge 3$, the simplex $T_{C_n}$ is reflexive if and only if $n$ is odd.
\end{theorem}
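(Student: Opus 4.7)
The plan is to check reflexivity directly by showing that each facet of $T_{C_n}$ lies at lattice distance $1$ from $\bzero$. Since $C_n$ is vertex-transitive, removing each of the $n$ edges yields a distinct spanning tree and each admits a unique orientation converging to any prescribed root, so $c_i = n$ for every $i$; Proposition~\ref{prop:dependence} then gives $\sum_{i=1}^n \bv_i = \bzero$. The cyclic shift $i \mapsto i+1 \pmod{n}$ acts as a permutation matrix on $\Z^n$ (hence as a lattice automorphism), commutes with the circulant matrix $L(C_n)$, and therefore permutes the facets of $T_{C_n}$ transitively; so it suffices to handle the single facet $F_1 = \conv(\bv_2,\ldots,\bv_n)$ opposite $\bv_1$.

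I will use the following standard reformulation: the lattice distance from $\bzero$ to $F_1$ equals $1$ iff the unique linear functional $y$ on the hyperplane $H = \{\bx\in\R^n : \sum x_i = 0\}$ satisfying $y(\bv_j) = 1$ for $j = 2,\ldots,n$ belongs to the dual lattice $\Lambda^*$ of $\Lambda \coloneqq H \cap \Z^n$. The dependence $\sum \bv_i = \bzero$ forces $y(\bv_1) = -(n-1)$. Lifting $y$ to $\tilde y \in \R^n$, well-defined modulo $\R\bone$, and using that $\bv_j$ is the $j$-th row of $L(C_n)$, the conditions become the linear system
\[
L(C_n)\,\tilde y \;=\; (-(n-1),\,1,\,1,\ldots,1)^T.
\]

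Normalizing by $\tilde y_1 = 0$, this is a discrete Poisson equation on the cycle. The middle equations give the recursion $\tilde y_{j+1} - 2\tilde y_j + \tilde y_{j-1} = -1$, which together with the cyclic boundary conditions has the closed form
\[
\tilde y_j \;=\; \frac{(j-1)(n-j+1)}{2}, \qquad j=1,\ldots,n,
\]
as can be verified by direct substitution into all $n$ rows.

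Finally, $y \in \Lambda^* = \Z^n/\Z\bone$ iff some representative $\tilde y + t\bone$ lies in $\Z^n$, iff every difference $\tilde y_i - \tilde y_j$ is an integer. For odd $n$, the factors $(j-1)$ and $(n-j+1)$ have opposite parities (they sum to $n$), so their product is even and each $\tilde y_j \in \Z$; hence $y \in \Lambda^*$ and $T_{C_n}$ is reflexive. For even $n$, $\tilde y_2 - \tilde y_1 = (n-1)/2 \notin \Z$, so no integer representative exists and $T_{C_n}$ is not reflexive. The main obstacle is guessing and verifying the closed form of $\tilde y$; once that formula is in hand, the parity dichotomy is immediate.
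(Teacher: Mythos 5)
Your proof is correct, but note that the paper does not prove Theorem~\ref{thm:ref_cycles} at all: it is imported verbatim from \cite[Theorem~5.1]{BM17}, so there is no internal proof to compare against. Your argument stands on its own and is compatible with the paper's framework: every spanning tree of $C_n$ is a path with a unique orientation converging to a prescribed root, so $c_i=n$ for all $i$ and Proposition~\ref{prop:dependence} gives $\sum_{i}\bv_i=\bzero$; the cyclic relabeling $i\mapsto i+1$ is a coordinate permutation commuting with the circulant $L(C_n)$, so it suffices to test the facet opposite $\bv_1$; and your closed form does solve the system, since the second difference of the quadratic $f(j)=\tfrac{(j-1)(n+1-j)}{2}$ is $-1$ for the rows $j=2,\dots,n-1$, while the wrap-around rows check directly as $2f(n)-f(n-1)-f(1)=1$ and $2f(1)-f(2)-f(n)=-(n-1)$. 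The parity dichotomy then settles both directions, with $f(2)-f(1)=\tfrac{n-1}{2}$ witnessing non-reflexivity for even $n$. Two points you leave implicit are fine but worth flagging: uniqueness of the facet functional $y$ uses that $\bv_2,\dots,\bv_n$ are linearly independent (which follows from the uniqueness of the dependence in Proposition~\ref{prop:dependence}), and the identification of the dual of $\Lambda=H\cap\Z^n$ with $\Z^n/\Z\bone$ is exactly the statement that $y\in\Lambda^*$ iff all differences $\tilde y_i-\tilde y_j$ are integers. It is also worth observing why a shortcut through the paper's own machinery is unavailable: Corollary~\ref{cor:refl} requires $\gcd\{c_1,\dots,c_n\}=1$, whereas here $\gcd=n$, which is presumably why the paper cites \cite{BM17} for the case $S=[n]$ in Theorem~\ref{thm:ref_dir_cycles} instead of using Proposition~\ref{prop:wps}. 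Your route --- explicitly solving the discrete Poisson equation on the cycle for the facet normal and checking its integrality --- is the natural direct computation and is in the same spirit as the original facet-description argument of \cite{BM17}.
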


The rest of this section is aimed to generalize their result to the case of directed cycles. Note that in order to have reflexivity (or, in particular, to have one interior lattice point) we need the digraph to be strongly connected (Corollary~\ref{cor:0ininterior}). Therefore, all cycles we consider will always contain a cycle entirely oriented in one of the two possible directions and some additional edges directed in the opposite direction. Informally speaking, we define a cycle digraph to have all the edges pointing clockwise and some edges pointing counterclockwise.

\begin{definition}
\label{def:cycle}
Let $n \geq 3$. We say that a digraph $D$ on the vertex set $[n]$ is a \emph{cycle digraph} if, up to a relabeling of the vertices, $E(D) = \overrightarrow{E}(D) \cup \overleftarrow{E}(D)$, where
\begin{align*}
\overrightarrow{E}(D) &= \{(1,2),(2,3),\ldots,(n-1,n),(n,1)\},\\
\overleftarrow{E}(D) &\subseteq \{(n,n-1),(n-1,n-2),\ldots,(2,1),(1,n)\}.
\end{align*}
If such a relabeling exists, $D$ is completely determined by $\overleftarrow{E}(D)$, and we denote it by $D=C_n^S$, where $S \subseteq [n]$ is the set of the tails of the directed edges in $\overleftarrow{E}(D)$. As an example see Figure~\ref{fig:dir_cycle}.
\end{definition}

\begin{figure}[h]
\centering
\begin{tikzpicture}[->,>=stealth',shorten >=1pt,auto,node distance=2cm,
  thick,main node/.style={}]

  \node[main node] at (72:1)	(1) {1};
  \node[main node] at (0:1)		(2) {2};
  \node[main node] at (288:1)	(3) {3};
  \node[main node] at (216:1)	(4) {4};
  \node[main node] at (144:1)	(5) {5};

  \path[every node/.style={font=\sffamily\small}]
    (1) edge [bend left=20] (2)
		edge [bend left=20] (5)
    (2) edge [bend left=20] (3)
    (3) edge [bend left=20] (4)
		edge [bend left=20] (2)
    (4) edge [bend left=20] (5)
    (5) edge [bend left=20] (1);
\end{tikzpicture}
\caption{The cycle digraph $C_5^{1,3}$.}
\label{fig:dir_cycle}
\end{figure}
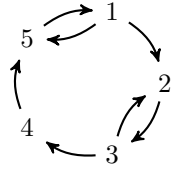

We first prove that, for most of the directed cycles, the associated Laplacian simplex has no other lattice points other than the origin and the vertices. Borrowing some terminology from the algebraic geometers, we call \emph{terminal Fano} a simplex with this property.

\begin{theorem}
\label{thm:terminal}
Let $D$ be any cycle digraph. $P_{D}$ is terminal Fano if and only if $D$ is not, up to a relabeling of the vertices, one of the following six exceptional directed cycles.
\begin{figure}[H]
\centering
\begin{tikzpicture}[->,>=stealth',shorten >=1pt,auto,node distance=2cm,
  thick,main node/.style={},scale=.8]

  \node[main node] at (60:1)	(1) {$1$};
  \node[main node] at (300:1)	(2) {$2$};
  \node[main node] at (180:1)	(3) {$3$};

  \path[every node/.style={font=\sffamily\small}]
    (1) edge [bend left=20] (2)
		edge [bend left=20] (3)
    (2) edge [bend left=20] (3)
    (3) edge [bend left=20] (1);
\end{tikzpicture}
\hspace*{.3cm}
\begin{tikzpicture}[->,>=stealth',shorten >=1pt,auto,node distance=2cm,
  thick,main node/.style={},scale=.8]

  \node[main node] at (60:1)	(1) {$1$};
  \node[main node] at (300:1)	(2) {$2$};
  \node[main node] at (180:1)	(3) {$3$};

  \path[every node/.style={font=\sffamily\small}]
    (1) edge [bend left=20] (2)
		edge [bend left=20] (3)
    (2) edge [bend left=20] (3)
		edge [bend left=20] (1)
    (3) edge [bend left=20] (1);
\end{tikzpicture}
\hspace*{.3cm}
\begin{tikzpicture}[->,>=stealth',shorten >=1pt,auto,node distance=2cm,
  thick,main node/.style={},scale=.8]

  \node[main node] at (60:1)	(1) {$1$};
  \node[main node] at (300:1)	(2) {$2$};
  \node[main node] at (180:1)	(3) {$3$};

  \path[every node/.style={font=\sffamily\small}]
    (1) edge [bend left=20] (2)
		edge [bend left=20] (3)
    (2) edge [bend left=20] (3)
		edge [bend left=20] (1)
    (3) edge [bend left=20] (1)
 		edge [bend left=20] (2);
\end{tikzpicture}
\hspace*{.3cm}
\begin{tikzpicture}[->,>=stealth',shorten >=1pt,auto,node distance=2cm,
  thick,main node/.style={},scale=.8]

  \node[main node] at (60:1)	(1) {$1$};
  \node[main node] at (330:1)	(2) {$2$};
  \node[main node] at (240:1)	(3) {$3$};
  \node[main node] at (150:1)	(4) {$4$};

  \path[every node/.style={font=\sffamily\small}]
    (1) edge [bend left=20] (2)
		edge [bend left=20] (4)
    (2) edge [bend left=20] (3)
    (3) edge [bend left=20] (4)
		edge [bend left=20] (2)
    (4) edge [bend left=20] (1);
\end{tikzpicture}
\hspace*{.3cm}
\begin{tikzpicture}[->,>=stealth',shorten >=1pt,auto,node distance=2cm,
  thick,main node/.style={},scale=.8]

  \node[main node] at (60:1)	(1) {$1$};
  \node[main node] at (330:1)	(2) {$2$};
  \node[main node] at (240:1)	(3) {$3$};
  \node[main node] at (150:1)	(4) {$4$};

  \path[every node/.style={font=\sffamily\small}]
    (1) edge [bend left=20] (2)
		edge [bend left=20] (4)
    (2) edge [bend left=20] (3)
		edge [bend left=20] (1)
    (3) edge [bend left=20] (4)
		edge [bend left=20] (2)
    (4) edge [bend left=20] (1);
\end{tikzpicture}
\hspace*{.3cm}
\begin{tikzpicture}[->,>=stealth',shorten >=1pt,auto,node distance=2cm,
  thick,main node/.style={},scale=.8]

  \node[main node] at (60:1)	(1) {$1$};
  \node[main node] at (330:1)	(2) {$2$};
  \node[main node] at (240:1)	(3) {$3$};
  \node[main node] at (150:1)	(4) {$4$};

  \path[every node/.style={font=\sffamily\small}]
    (1) edge [bend left=20] (2)
		edge [bend left=20] (4)
    (2) edge [bend left=20] (3)
		edge [bend left=20] (1)
    (3) edge [bend left=20] (4)
		edge [bend left=20] (2)
    (4) edge [bend left=20] (1)
		edge [bend left=20] (3);
\end{tikzpicture}
\end{figure}
\end{theorem}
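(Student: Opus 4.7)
The proof will proceed in two directions.

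For the forward direction---that each of the six listed digraphs fails to be terminal---the idea is to exhibit an explicit non-vertex, non-origin lattice point in $P_D$. In all three $n = 3$ exceptions we have $1 \in S$, and the point $(1, 0, -1) \in \Z^3$ lies in $P_D$: when $S = \{1\}$ it equals $\tfrac{1}{2}(\bv_1 + \bv_2)$, while whenever $\{1, 2\} \subseteq S$ it equals $\tfrac{2}{3}\bv_1 + \tfrac{1}{3}\bv_2$. In all three $n = 4$ exceptions, $\{1, 3\} \subseteq S$ after relabeling, so $\bv_1 = (2, -1, 0, -1)$ and $\bv_3 = (0, -1, 2, -1)$; their midpoint $\tfrac{1}{2}(\bv_1 + \bv_3) = (1, -1, 1, -1)$ is then the required extra lattice point. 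Each verification amounts to a short direct calculation, and in each case the exhibited point differs from the origin and every vertex.

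For the reverse direction, let $D = C_n^S$ be a cycle digraph not among the six exceptions, and suppose for contradiction that $\bx \in P_D \cap \Z^n$ is neither the origin nor a vertex. By Proposition~\ref{prop:simple}, the origin is the unique interior lattice point of $P_D$, so $\bx$ is on the boundary. Write the (unique) barycentric expansion $\bx = \sum_{i=1}^n \alpha_i \bv_i$ with $\alpha_i \in [0, 1)$ and $\sum_i \alpha_i = 1$. Since $\outdeg(j) \leq 2$ and $\alpha_i < 1$, the $j$-th coordinate
\[
\bx_j = (1 + [j \in S])\alpha_j - \alpha_{j-1} - [j+1 \in S]\alpha_{j+1}
\]
lies strictly in $(-2, 2)$, so $\bx_j \in \{-1, 0, 1\}$ for every $j$. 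Moreover, at each \emph{quiet} position $j$ (one with $j \notin S$ and $j+1 \notin S$) the equation reduces to $\bx_j = \alpha_j - \alpha_{j-1} = 0$, forcing $\alpha$ to be constant on every maximal quiet arc.

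To complete the argument, we reduce to a finite linear system by collapsing the quiet arcs. Let the distinct values of $\alpha$ be indexed by the equivalence classes of the relation ``$j \sim j-1$ iff $j$ is quiet''; the active equations $\bx_j \in \{-1, 0, 1\}$ for $j \in S \cup (S-1)$, together with $\sum_j \bx_j = 0$ and $\sum_i \alpha_i = 1$, then give a small system whose solution set is parameterized by the choice of integers $\bx_j$. A case analysis on the cyclic arrangement of the maximal runs and gaps of $S$ shows that the trivial choice $\bx_j \equiv 0$ recovers exactly the origin (with $\alpha_i = c_i/c(D)$), while any nontrivial choice forces some $\alpha_i$ to be negative unless $(n, S)$ coincides with one of the six exceptional shapes. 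The subcase $S = [n]$, where $\gcd(c_i) = n$ and the identification with a weighted projective space breaks down, will be handled separately by analysing the discrete second-difference equation $\bx_j = 2\alpha_j - \alpha_{j-1} - \alpha_{j+1}$ directly on $\Z/n\Z$. The main obstacle is this combinatorial case analysis: while each individual pattern is straightforward, one must carefully track the positivity constraints on $\alpha$ across all possible run-gap structures of $S$.
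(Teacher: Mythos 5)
Your forward direction is fine: the explicit points $(1,0,-1)$ for the three $n=3$ exceptions and $(1,-1,1,-1)=\tfrac12(\bv_1+\bv_3)$ for the three $n=4$ exceptions are correct non-vertex, non-origin lattice points (I checked the convex combinations), so those six simplices are indeed not terminal Fano. Your setup for the converse also matches the paper's: write $\bx=\sum_i\alpha_i\bv_i$ with $0\le\alpha_i<1$, observe that each coordinate is $(1+[j\in S])\alpha_j-\alpha_{j-1}-[j+1\in S]\alpha_{j+1}$ and hence lies in $\{-1,0,1\}$, and note that quiet positions force $\alpha_j=\alpha_{j-1}$.

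The genuine gap is that the converse is never actually proved. The sentence ``any nontrivial choice forces some $\alpha_i$ to be negative unless $(n,S)$ coincides with one of the six exceptional shapes'' is precisely the hard content of the theorem, asserted rather than established, and the proposed reduction does not make it a finite check: after collapsing quiet arcs the number of active equations is $|S\cup(S-1)|$, and the run--gap structure of $S$ grows without bound with $n$, so ``a case analysis on the cyclic arrangement of the maximal runs and gaps'' is an open-ended family of cases, not a small system. You would need an additional \emph{local} argument showing that any coordinate equal to $\pm1$ already produces a contradiction for $n\ge5$ regardless of the global pattern of $S$. This is exactly what the paper does: it first rules out a coordinate equal to $-1$ (if $x_2=-1$ then only $\lambda_1,\lambda_3$ can be nonzero, forcing $\lambda_1=\lambda_3=1/2$ and then $x_4=-1/2$), then assumes some $x_3=1$, deduces $\lambda_3\ge1/2$, $b_2=b_4=2$, $x_2=x_4=0$, and reaches $x_3=1/2$, a contradiction -- all using only five consecutive indices, hence valid for every $n\ge5$ and every $S$; the cases $n=3,4$ are finite and checked directly, which is where the six exceptions arise. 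The separate treatment you promise for $S=[n]$ (the second-difference equation on $\Z/n\Z$) is likewise only announced, not carried out, and in fact the uniform local argument above covers it, so the special-casing is unnecessary. As written, the proposal is a plausible plan whose decisive step is missing.
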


\begin{proof}
We prove that, for $n \geq 5$, $P_{C_n^S}$ is terminal Fano for all $S \subseteq [n]$. The lower dimensional cases are checked individually, leading to the six exceptional cases above. For $1 \leq i \leq n$, we set $\bv_i=a_{i-1}\be_{i-1}+b_{i}\be_i-\be_{i+1}$ with $a_{j} \in \{-1,0\}$ and $b_{j}=1-a_{j} \in \{1,2\}$, where $a_0=a_n$, $\be_0=\be_n$ and $\be_{n+1}=\be_{1}$. Assume that $P_{C_n^S}$ is not terminal Fano.
Let $\bx=(x_1,\ldots,x_n)$ be a lattice point of $P_{C_n^S} \setminus (\{\bv_1,\ldots,\bv_n, {\bf 0} \})$ and set 
$\bx= \sum_{i=1}^{n}\lambda_i \bv_i$ with $0 \leq \lambda_1,\ldots,\lambda_n <1$ and $\lambda_1+\cdots+\lambda_n=1$.
Then one has $x_i=-\lambda_{i-1}+b_{i}\lambda_i+a_{i+1}\lambda_{i+1} \in \{-1,0,1\}$ for each $i$,
where $\lambda_0=\lambda_n$, $\lambda_{n+1}=\lambda_1$ and $a_{n+1}=a_1$.
If $x_2=-1$, then we obtain $a_{3}=-1$, $0<\lambda_{1},\lambda_{3} <1$ and $\lambda_j=0$ for any $j \neq 1,3$.
Since $x_{3}=2\lambda_{3}$, it follows that $\lambda_1=\lambda_3=1/2$ and $a_{1}=-1$.
However, $x_4=-1/2$, a contradiction.
Hence we have $x_i \in \{0,1\}$ for each $i$.
Since $\bx \neq \bzero$, we can assume that $x_3=1$.
Then one has $b_3=2$ and $\lambda_3 \geq 1/2$.
If $b_2=1$, then we obtain $\lambda_2=\lambda_3=1/2$.
However, $x_3=1/2$, a contradiction.
Hence one has $b_2=b_4=2$.
Moreover, it follows from  $\lambda_3 \geq 1/2$ that $x_2=x_4=0$.
Then it follows that $\lambda_1=3a_5\lambda_5+4\lambda_4-2 \leq 4 \lambda_4-2 \leq 0$.
Hence we obtain $\lambda_3=\lambda_4=1/2$.
However, $x_3=1/2$, a contradiction.
Therefore, $P_{C_n^S}$ is terminal Fano. 
\end{proof}

Now we characterize reflexivity for Laplacian simplices $P_{C_n^S}$, extending Theorem~\ref{thm:ref_cycles} by Braun--Meyer.

\begin{theorem}
\label{thm:ref_dir_cycles}
The Laplacian simplex $P_{C_n^S}$ associated to a cycle digraph $C_n^S$, is reflexive if and only if one of the following conditions is satisfied:
\begin{enumerate}
\item $S = \varnothing$, or
\item $S = [n]$ and $n=2$, or
\item $S = [n]$ and $n$ is odd, or
\item $\varnothing \subsetneq S \subsetneq [n]$, such that $k | c(D)$ for each integer $1 \leq k \leq K+1$, where $K$ is the longest chain of consecutive edges pointing counterclockwise, i.e. 
\[
K \coloneqq \max \{ j \mid \{a+1,\ldots,a+j\} \subseteq S, \mbox{ for some  } a \in [n]\},
\]
where, since $S \subsetneq [n]$, we have assumed without loss of generality, that $1 \notin S$.
\end{enumerate}
\end{theorem}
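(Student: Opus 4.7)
The plan is to reduce reflexivity of $P_{C_n^S}$ to an explicit divisibility statement via Corollary~\ref{cor:refl}, using a closed-form count of the complexities $c_i$. The key formula to prove is
\[
c_i \;=\; \ell_i + 1, \qquad \text{where } \ell_i \;=\; \max\bigl\{k \ge 0 : \{i+1,i+2,\ldots,i+k\} \subseteq S\bigr\},
\]
with indices read modulo $n$ and $k \le n-1$; in words, $\ell_i$ is the length of the longest run of consecutive elements of $S$ starting just clockwise of $i$. Once this is in hand, the characterization splits naturally according to whether $S$ is empty, everything, or something in between.

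To prove the formula I would argue combinatorially. A converging arborescence rooted at $i$ prescribes, for each $j \neq i$, a single outgoing edge: the clockwise edge $(j,j+1)$ (always available), or, provided $j \in S$, the counterclockwise edge $(j,j-1)$. The key input is that the only directed cycles of $C_n^S$ are the clockwise $n$-cycle, the counterclockwise $n$-cycle (which exists only when $S = [n]$), and the $2$-cycles $j \to j+1 \to j$ for $j+1 \in S$; indeed any directed cycle of length at least $3$ with distinct vertices would correspond to a cycle in the underlying simple graph $C_n$, and the only such cycle is the Hamiltonian one. The two $n$-cycles pass through $i$ and are automatically broken in any arborescence rooted at $i$, so acyclicity reduces to forbidding the pattern $(c(j), c(j+1)) = (+1, -1)$ in the sequence of direction choices. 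Reading the sequence in clockwise order from $i+1$ to $i-1$, this forces a prefix of $-1$'s followed by a suffix of $+1$'s, with the $-1$-block contained in $S$. The admissible prefix lengths range over $\{0, 1, \ldots, \ell_i\}$, so $c_i = \ell_i + 1$.

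With the formula established the casework is short. If $S = \varnothing$, every $c_i$ equals $1$ and Corollary~\ref{cor:refl} immediately yields reflexivity, proving (1). If $S = [n]$, every $c_i$ equals $n$, so $\gcd(c_1, \ldots, c_n) = n$ and Corollary~\ref{cor:refl} does not apply directly; however $L(C_n^{[n]})$ agrees with the undirected Laplacian $L(C_n)$, so $P_{C_n^{[n]}} = T_{C_n}$, and Theorem~\ref{thm:ref_cycles} gives reflexivity exactly when $n$ is odd, which is (3); the degenerate case in (2) is a direct check. Finally, if $\varnothing \subsetneq S \subsetneq [n]$, pick any $j \notin S$: then $c_{j-1} = 1$, so $\gcd(c_1,\ldots,c_n) = 1$ and Corollary~\ref{cor:refl} applies. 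To convert the condition $c_i \mid c(D)$ for all $i$ into condition~(4), I would verify $\{c_i : i \in [n]\} = \{1, 2, \ldots, K+1\}$: each maximal run of length $m$ in $S$ at positions $a+1, \ldots, a+m$ (with $a, a+m+1 \notin S$) contributes $\ell$-values $m, m-1, \ldots, 1$ at positions $a, a+1, \ldots, a+m-1$, and the remaining positions contribute $\ell_i = 0$; the longest run realizes the maximum value $K$.

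The main obstacle is the combinatorial core: justifying the cycle structure of $C_n^S$ and then showing that the valid direction sequences take exactly the prefix-of-$-1$'s-then-$+1$'s shape. After this is done, the remainder is bookkeeping about runs in $S$ together with a direct invocation of Corollary~\ref{cor:refl} and Theorem~\ref{thm:ref_cycles}.
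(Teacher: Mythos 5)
Your proposal is correct and follows essentially the same route as the paper: compute $c_i = K_i + 1$ (your $\ell_i$ is the paper's $K_i$), reduce case $\varnothing \subsetneq S \subsetneq [n]$ to Corollary~\ref{cor:refl} after noting some $c_i=1$ forces $\gcd=1$, identify $\{c_i\}=\{1,\ldots,K+1\}$, and handle $S=\varnothing$ and $S=[n]$ via a direct check and Theorem~\ref{thm:ref_cycles}. The only difference is cosmetic: where the paper simply exhibits the $K_i+1$ converging trees by listing their edge sets, you derive the same count from the cycle structure of $C_n^S$ and the forbidden direction pattern, which is a sound and slightly more detailed justification of the same fact.
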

\begin{proof}
If $S$ satisfies $(1)$ or $(2)$, then it trivial to check that $P_{C_n^S}$ is reflexive. If $S$ satisfies $(3)$, then $P_{C_n^S}$ is reflexive by Theorem~\ref{thm:ref_cycles}. Suppose now that $S$ satisfies $(4)$. In particular we have assumed that $1 \notin S$. This implies vertex $n$ has exactly one spanning converging tree, i.e. $c_{n}=1$. 
As usual, $c_i$ denotes, the number of spanning trees which converge to vertex $i$. Then $\gcd(c_1, \ldots, c_n) = 1$, and $P_{C_n^S}$ is a weighted projective space by Proposition~\ref{prop:wps}.
For each vertex $i$ we denote by $K_i$ the length of the longest chain of consecutive edges pointing counterclockwise ending in $i$, i.e.
\[
K_i \coloneqq \max \{ j \mid \{i+1,\ldots,i+j\} \subseteq S\}, \qquad \mbox{for $1 \leq i \leq n$ }
\]
in particular $K_n=0$ and $K= \max \{K_i \mid i \in [n] \}$. Given $i \in [n]$, note that there are exactly $K_i + 1$ spanning trees converging to $i$. There are $K_i$,  having edge set
\[
\{ (j,j-1),\ldots,(i+1,i),(j+1,j+2),\ldots,(n-1,n),(n,1),\ldots,(i-1,i) \},
\]
for all $j \in \{i+1,\ldots,i+K_i\}$, plus an additional ``clockwise tree'' with edges
\[
\{(i+1,i+2),\ldots,(n-1,n),(n,1),\ldots,(i-1,i) \}.
\]
By Corollary~\ref{cor:refl}, $P_{C_n^S}$ is reflexive if and only if $c_i|c(D)$, for all $i \in [n]$. We conclude by noting that if $c_i > 1$ for some $i \in [n]$, then $c_{i+1}=c_i-1$, in particular $\{c_i \mid i \in [n] \} = \{ 1, \ldots, K+1\}$.
\end{proof}

We now have all the tools to completely characterize all the reflexive IDP simplices arising from cycle digraphs.

\begin{theorem}
\label{thm_IDP}
Let $C_n^S$ be a cycle digraph on $n$ vertices such that $P_{C_n^S}$ is reflexive. Then $P_{C_n^S}$ possesses the integer decomposition property if and only if $D$ satisfies one of the following conditions:
\begin{enumerate}
\item $S=\varnothing$, or
\item $D$ is, up to a relabeling of the vertices, one of the following directed cycles.
\end{enumerate}
\begin{figure}[h]
\centering
\begin{tikzpicture}[->,>=stealth',shorten >=1pt,auto,node distance=2cm,
  thick,main node/.style={},scale=.8]

  \node[main node] at (60:1)	(1) {$1$};
  \node[main node] at (300:1)	(2) {$2$};
  \node[main node] at (180:1)	(3) {$3$};

  \path[every node/.style={font=\sffamily\small}]
    (1) edge [bend left=20] (2)
		edge [bend left=20] (3)
    (2) edge [bend left=20] (3)
    (3) edge [bend left=20] (1);
\end{tikzpicture}
\hspace*{1cm}
\begin{tikzpicture}[->,>=stealth',shorten >=1pt,auto,node distance=2cm,
  thick,main node/.style={},scale=.8]

  \node[main node] at (60:1)	(1) {$1$};
  \node[main node] at (300:1)	(2) {$2$};
  \node[main node] at (180:1)	(3) {$3$};

  \path[every node/.style={font=\sffamily\small}]
    (1) edge [bend left=20] (2)
		edge [bend left=20] (3)
    (2) edge [bend left=20] (3)
		edge [bend left=20] (1)
    (3) edge [bend left=20] (1);
\end{tikzpicture}
\hspace*{1cm}
\begin{tikzpicture}[->,>=stealth',shorten >=1pt,auto,node distance=2cm,
  thick,main node/.style={},scale=.8]

  \node[main node] at (60:1)	(1) {$1$};
  \node[main node] at (300:1)	(2) {$2$};
  \node[main node] at (180:1)	(3) {$3$};

  \path[every node/.style={font=\sffamily\small}]
    (1) edge [bend left=20] (2)
		edge [bend left=20] (3)
    (2) edge [bend left=20] (3)
		edge [bend left=20] (1)
    (3) edge [bend left=20] (1)
 		edge [bend left=20] (2);
\end{tikzpicture}
\hspace*{1cm}
\begin{tikzpicture}[->,>=stealth',shorten >=1pt,auto,node distance=2cm,
  thick,main node/.style={},scale=.8]

  \node[main node] at (60:1)	(1) {$1$};
  \node[main node] at (330:1)	(2) {$2$};
  \node[main node] at (240:1)	(3) {$3$};
  \node[main node] at (150:1)	(4) {$4$};

  \path[every node/.style={font=\sffamily\small}]
    (1) edge [bend left=20] (2)
		edge [bend left=20] (4)
    (2) edge [bend left=20] (3)
    (3) edge [bend left=20] (4)
		edge [bend left=20] (2)
    (4) edge [bend left=20] (1);
\end{tikzpicture}
\end{figure}
\end{theorem}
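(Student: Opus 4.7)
The proof combines Theorem~\ref{thm:ref_dir_cycles} (classifying reflexive $P_{C_n^S}$) with Proposition~\ref{prop:Delta_1q} and the necessary IDP condition of Lemma~\ref{lem:BDS_IDP}. The plan is to first verify IDP directly for each digraph listed in the statement, then use Lemma~\ref{lem:BDS_IDP} to rule out all remaining reflexive cycle digraphs.

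\textbf{Sufficiency.} For $S=\varnothing$, the oriented $n$-cycle has vertices $\bv_i = \be_i - \be_{i+1 \bmod n}$ and complexities $c_i=1$, so $\sum_i \bv_i = \bzero$ by Proposition~\ref{prop:dependence}. Given $\bx \in kP_D \cap \Z^n$ written as $\bx = \sum_i \lambda_i \bv_i$ with $\lambda_i \geq 0$ and $\sum_i \lambda_i = k$, the integrality of $x_j = \lambda_j - \lambda_{j-1}$ forces all $\lambda_i$ to share a common fractional part $\alpha \in [0,1)$ with $n\alpha \in \Z$, so $\lambda_i = \mu_i + \alpha$ with $\mu_i \in \Z_{\geq 0}$. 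The relation $\sum_i \bv_i = \bzero$ then yields $\bx = \sum_i \mu_i \bv_i$ with $\sum_i \mu_i = k - n\alpha$, and padding with $n\alpha$ copies of $\bzero \in P_D$ gives the required IDP decomposition. The four exceptional digraphs produce Laplacian simplices of dimension $2$ or $3$; IDP in each case is checked directly, e.g., using the $h^*$-polynomial formula of Theorem~\ref{thm:hstarq} or by enumerating the lattice points of $kP_D$ for small $k$.

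\textbf{Necessity.} Suppose $P_{C_n^S}$ is reflexive and IDP with $S\neq\varnothing$. By Theorem~\ref{thm:ref_dir_cycles} we are either in case~(3), $S=[n]$ with $n$ odd, or in case~(4) (case (2) is vacuous for $n \geq 3$). If $S=[n]$ and $n \geq 5$ is odd, Braun--Meyer \cite[Section~5]{BM17} show that $T_{C_n}$ fails to be IDP, so $n=3$, yielding the third exceptional. In case~(4), Proposition~\ref{prop:Delta_1q} together with the structural observation at the end of the proof of Theorem~\ref{thm:ref_dir_cycles} gives $P_D \cong \Delta_{(1,\mathbf{q})}$, where the multiset $\{c_1,\ldots,c_n\}$ coincides with $\{1,2,\ldots,K+1\}$ as a set (each value appearing with some multiplicity $m_k \geq 1$) and $\mathbf{q}$ is obtained by removing one instance of $1$. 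Applying Lemma~\ref{lem:BDS_IDP} at an index $j$ with $q_j=K+1$ yields
\[
\sum_{q_i < K+1} q_i = K,
\]
and since every value in $\{2,\ldots,K\}$ appears in $\mathbf{q}$, the left-hand side is at least $\tfrac{K(K+1)}{2}-1$, forcing $K \leq 2$.

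For $K=2$, applying Lemma~\ref{lem:BDS_IDP} additionally at entries with $q_j=2$ and $q_j=3$ pins the multiplicities to $(m_1,m_2,m_3)=(1,1,1)$, so $n=3$ and $\{c_i\}=\{1,2,3\}$; up to cyclic relabeling the unique cycle digraph realizing this is the second exceptional, with $|S|=2$. For $K=1$, the lemma at $q_j=2$ forces $m_1=2$ in $\{c_i\}$, hence $|S|=n-2$, while $K=1$ means $S$ contains no two cyclically consecutive vertices. An inspection of the size-$2$ complement $S^c$ then shows that only $(n,S)=(3,\{1\})$ and $(n,S)=(4,\{1,3\})$ meet both constraints, yielding the first and fourth exceptionals. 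The main obstacle is extracting the bound $K \leq 2$ from the interplay between Theorem~\ref{thm:ref_dir_cycles}'s structural description of the multiset $\{c_i\}$ and Lemma~\ref{lem:BDS_IDP}, and then carrying out the cyclic combinatorial enumeration carefully enough not to overlook any admissible $(n,S)$.
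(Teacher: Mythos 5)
Your proposal is correct and follows essentially the same route as the paper: treat $S=\varnothing$ directly and $S=[n]$ via Braun--Meyer, identify $P_{C_n^S}$ with $\Delta_{(1,\bq)}$ for $\varnothing\subsetneq S\subsetneq[n]$, apply Lemma~\ref{lem:BDS_IDP} at $q_j=K+1$ to force $K\le 2$ and hence $n\le 4$, and finish with a finite check of small cycle digraphs. You simply spell out steps the paper leaves terse (the explicit decomposition for the oriented cycle and the multiplicity enumeration for $K\in\{1,2\}$); the only small correction is that the identification $P_D\cong\Delta_{(1,\bq)}$ rests on Proposition~\ref{prop:wps} (equal weights give the same weighted projective space) rather than on Proposition~\ref{prop:Delta_1q}, which goes in the opposite direction.
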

\begin{proof}
If $S=\varnothing$ then $C_n^S$ is always a reflexive IDP simplex. If $S=[n]$, from Theorem~\ref{thm:ref_cycles}, $P_{C_n^{[n]}}$ is reflexive if and only if $n$ is odd. In this case it is known that $P_{C_n^{[n]}}$ is IDP if and only if $n=3$ \cite[Corollary~5.11]{BM17}. Now, assume that $\varnothing \neq S \neq [n]$ and $P_{C_n^S}$ is IDP. We use the same notation introduced in Theorem~\ref{thm:ref_dir_cycles}. Then we can assume that $c_1=1,c_2=K+1, c_3 = K, \ldots,c_{K+1}=2$. Set $\bq=(c_2,\ldots,c_n)$. It follows that $P_{C_n^S}$ is unimodularly equivalent to $\Delta_{(1,\bq)}$. By Lemma~\ref{lem:BDS_IDP}, we know that, for each $2 \leq j \leq n$
\begin{equation}
\label{eq:IDP}
\frac{1}{c_j}+\sum_{i\neq j}\left\{\frac{c_i}{c_j}\right\}=1.
\end{equation}
But, if $K \geq 3$, by \eqref{eq:IDP} we get
\[
\dfrac{1}{K+1}+\sum_{i=3}^{n}\left\{\dfrac{c_i}{K+1}\right\} \geq \dfrac{1}{K+1}+\dfrac{K-1}{K+1}+\dfrac{K}{K+1}>1,
\]
so $K$ must satisfy $1 \leq K \leq 2$. By applying \eqref{eq:IDP} in these cases one gets $n \leq 4$. By checking all the cycle digraph having up to four vertices, we get the exceptions represented above.
\end{proof}

As an application of the tools developed in this section, we build a special family of cycle digraphs whose Laplacian polytopes are reflexive and have non unimodal $h^*$-vectors.

\begin{theorem}
\label{thm:non_unim}
Let $\alpha, \beta, k \in \Z_{>0}$ such that $\alpha \le \beta \le k-1$ and $\alpha + \beta \le k+1$.
Let $D=C_n^S$ be the cycle digraph, with $n \coloneqq 6(k+1) - 2 \alpha - \beta$, and $S \coloneqq S_1 \cup S_2 \cup S_3$, with
\begin{align*}
& S_1 \coloneqq \{1 + 3h \mid 0 \leq h \leq \alpha - 1\}, \\
& S_2 \coloneqq \{2 + 3h \mid 0 \leq h \leq \alpha - 1\}, \\
& S_3 \coloneqq \{ 3\alpha + 1 + 2h \mid 0 \leq h \leq \beta-\alpha - 1\}.
\end{align*}
Then $P_{D}$ is a reflexive simplex of dimension $6(k+1) - 2 \alpha - \beta - 1$  with symmetric and nonunimodal $h^*$-vector
\[
(
\underbrace{1,\ldots,1}_{2(k+1)-\alpha} ,
\underbrace{2,\ldots,2}_{\alpha} ,
\underbrace{1,\ldots,1}_{(k+1)-\alpha-\beta} ,
\underbrace{2,\ldots,2}_{\beta} ,
\underbrace{1,\ldots,1}_{(k+1)-\alpha-\beta} ,
\underbrace{2,\ldots,2}_{\alpha} ,
\underbrace{1,\ldots,1}_{2(k+1)-\alpha} )
\]

\end{theorem}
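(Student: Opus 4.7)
The plan is to exploit the $\Delta_{(1,\bq)}$ machinery developed in Section~\ref{sec:3} and read off the $h^*$-vector directly from Theorem~\ref{thm:hstarq}. Setting $N := k+1$, the proof splits into three parts: computing the multiset of converging-tree counts $\{c_i\}$, identifying $P_D$ with some $\Delta_{(1,\bq)}$, and analysing the floor formula of Theorem~\ref{thm:hstarq}.

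First, I would determine the multiset $\{c_1, \ldots, c_n\}$ for $D = C_n^S$. The set $S$ decomposes into $\alpha$ pairs of consecutive integers coming from $S_1 \cup S_2$ and $\beta - \alpha$ isolated integers spaced two apart coming from $S_3$. Using the explicit description of $K_i$ from the proof of Theorem~\ref{thm:ref_dir_cycles}, a direct enumeration gives exactly $\alpha$ vertices with $c_i = 3$ (the vertices preceding a pair), $\beta$ vertices with $c_i = 2$ ($\alpha$ inside the pairs plus $\beta - \alpha$ preceding an $S_3$-singleton), and $n - \alpha - \beta$ vertices with $c_i = 1$. Summing gives $c(D) = 3\alpha + 2\beta + (n - \alpha - \beta) = 6N$. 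Every $c_i \in \{1,2,3\}$ divides $6N$, so Corollary~\ref{cor:refl} yields reflexivity; and since at least one vertex has $c_i = 1$, Proposition~\ref{prop:wps} together with the discussion preceding Proposition~\ref{prop:Delta_1q} identifies $P_D$ with $\Delta_{(1,\bq)}$, where $\bq$ is the multiset $\{c_1,\ldots,c_n\}$ with one ``$1$'' removed.

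Next, I would apply Theorem~\ref{thm:hstarq}. Because weights equal to $1$ contribute nothing to the floor sum for $b < 6N$, the formula collapses to
\[
w(b) = b - \alpha\lfloor b/(2N)\rfloor - \beta\lfloor b/(3N)\rfloor, \qquad b \in \{0, 1, \ldots, 6N - 1\}.
\]
The strata given by $\lfloor b/(2N)\rfloor \in \{0,1,2\}$ and $\lfloor b/(3N)\rfloor \in \{0,1\}$ collapse into four subranges on which $w(b) - b$ is constant: $w(b) = b$ on $[0,2N)$, $w(b) = b - \alpha$ on $[2N, 3N)$, $w(b) = b - \alpha - \beta$ on $[3N, 4N)$, and $w(b) = b - 2\alpha - \beta$ on $[4N, 6N)$. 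Thus the image $\{w(b)\}$ is the union of four consecutive integer segments $A = [0, 2N-1]$, $B = [2N-\alpha, 3N-\alpha-1]$, $C = [3N-\alpha-\beta, 4N-\alpha-\beta-1]$, $D = [4N-2\alpha-\beta, 6N-2\alpha-\beta-1]$ of lengths $2N, N, N, 2N$, each integer in its segment being attained exactly once.

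Finally, $h^*_j$ equals the number of these four segments containing $j$. Because $\alpha, \beta \geq 1$, consecutive segments overlap in intervals of lengths $\alpha$, $\beta$, $\alpha$, while the bound $\alpha + \beta \leq k+1 = N$ forces non-consecutive segments to be disjoint. The seven resulting maximal sub-blocks on which $h^*_j$ is constant therefore have lengths $2N-\alpha, \alpha, N-\alpha-\beta, \beta, N-\alpha-\beta, \alpha, 2N-\alpha$ and values $1, 2, 1, 2, 1, 2, 1$, recovering the stated $h^*$-vector; symmetry follows from the reflexivity just established (or by inspection of the palindromic pattern), and non-unimodality is visible once the middle $1$-blocks have positive length. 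The main obstacle is the combinatorial bookkeeping of the multiset $\{c_i\}$ from the structure of $S$, combined with the careful overlap analysis of the four segments; once these are set up correctly, Theorem~\ref{thm:hstarq} does the rest mechanically, with the parameter constraints chosen precisely to produce the seven-block pattern.
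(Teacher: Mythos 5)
Your proposal is correct and follows essentially the same route as the paper: compute the multiset of converging-tree counts ($\alpha$ threes, $\beta$ twos, the rest ones, so $c(D)=6(k+1)$), get reflexivity from the divisibility criterion, identify $P_D$ with $\Delta_{(1,\bq)}$, and evaluate $w(b)=b-\alpha\lfloor b/(2(k+1))\rfloor-\beta\lfloor b/(3(k+1))\rfloor$ on the same four intervals. Your segment-overlap bookkeeping is just a repackaging of the paper's direct listing of which coefficients equal $2$ (and your labeling of the vertices with $c_i=3$ as those preceding a pair is, if anything, the more careful reading of the $K_i$ convention), so there is nothing to add.
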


\begin{proof} An example of the digraph in the statement is represented in Figure~\ref{fig:non_unim}. 
The digraph has no more than two consecutive vertices with outdegree two, so the number of spanning trees converging to each of the vertices of $D$ is at most three. 
Specifically,
\[
c_i = 
\begin{cases}
3, & \mbox{if } i \in S_1,\\
2, & \mbox{if } i \in S_2 \cup S_3,\\
1, & \mbox{if } i \in [n] \setminus S.
\end{cases}
\]
Since each $c_i$ divides $c(D) = \sum_{i = 1 }^n c_i = 6(k+1)$, then $P_D$ is reflexive by Theorem~\ref{thm:ref_dir_cycles}. 
Now we use Theorem~\ref{thm:hstarq} to describe its $h^*$-polynomial. 
In particular \[
h^*(P_D,z)=\sum_{b=0}^{c(D)-1} x^{w(b)}, \quad \text{with} \; w(b)=b-\sum_{i=1}^{n} \left \lfloor \frac{c_i b}{6(k+1)} \right \rfloor.
\]
In our case this becomes
\[
w(b) = b - \alpha  \left \lfloor \frac{b}{2(k+1)}\right \rfloor - \beta  \left \lfloor \frac{b}{3(k+1)} \right \rfloor,
\]
which yields
\[
w(b) = \left\{\begin{alignedat}{5}
&b, 					&&	\mbox{if } \quad 	&0 			&\leq 	b \leq 2(k+1)-1, \\
&b-\alpha, 				&&	\mbox{if } 			&2(k+1) 	&\leq 	b \leq 3(k+1)-1, \\
&b-\alpha-\beta, 		&&	\mbox{if } 			&3(k+1) 	&\leq 	b \leq 4(k+1)-1, \\
&b-2\alpha-\beta, \quad &&	\mbox{if } 			&4(k+1) 	&\leq 	b \leq 6(k+1)-1.
\end{alignedat}\right.
\]
From this we deduce the $i$-th coefficient of the $h^*$-polynomial:
\[
h^*_i =
\begin{cases}
2, & \mbox{if } \qquad
	\begin{cases}
	2(k+1)-\alpha \leq i \leq 2(k+1)-1, \,\text{or}\\
	3(k+1)-\alpha -\beta \leq i \leq 3(k+1)-\alpha-1,\,\text{or}\\
	4(k+1)-2\alpha -\beta \leq i \leq 4(k+1)-\alpha-\beta-1;  \,\\
	\end{cases} \\
1, & \mbox{otherwise.} \\
\end{cases}
\]

\end{proof}

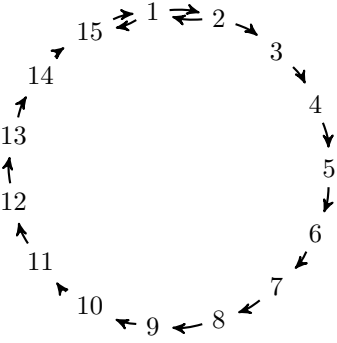
\begin{figure}
\centering
\begin{tikzpicture}[->,>=stealth',shorten >=1pt,auto,node distance=2cm,
  thick,main node/.style={},scale=.7]

  \node[main node] at (96:3)	(1) {$1$};
  \node[main node] at (72:3)	(2) {$2$};
  \node[main node] at (48:3)	(3) {$3$};
  \node[main node] at (24:3)	(4) {$4$};
  \node[main node] at (0:3)		(5) {$5$};
  \node[main node] at (336:3)	(6) {$6$};
  \node[main node] at (312:3)	(7) {$7$};
  \node[main node] at (288:3)	(8) {$8$};
  \node[main node] at (264:3)	(9) {$9$};
  \node[main node] at (240:3)	(10) {$10$};
  \node[main node] at (216:3)	(11) {$11$};
  \node[main node] at (192:3)	(12) {$12$};
  \node[main node] at (168:3)	(13) {$13$};
  \node[main node] at (144:3)	(14) {$14$};
  \node[main node] at (120:3)	(15) {$15$};

  \path[every node/.style={font=\sffamily\small}]
    (1) edge [bend left=10] (2)
		edge [bend left=10] (15)
    (2) edge [bend left=10] (3)
		edge [bend left=10] (1)
    (3) edge [bend left=10] (4)
    (4) edge [bend left=10] (5)
    (5) edge [bend left=10] (6)
    (6) edge [bend left=10] (7)
    (7) edge [bend left=10] (8)
    (8) edge [bend left=10] (9)
    (9) edge [bend left=10] (10)
    (10) edge [bend left=10] (11)
    (11) edge [bend left=10] (12)
    (12) edge [bend left=10] (13)
    (13) edge [bend left=10] (14)
    (14) edge [bend left=10] (15)
    (15) edge [bend left=10] (1);
\end{tikzpicture}
\caption{An example of the construction of Theorem~\ref{thm:non_unim}. In this case $\alpha=\beta=1$ and $k=2$. The Laplacian simplex associated to this digraph has $h^*$-polynomial $(1,1,1,1,1,2,1,2,1,2,1,1,1,1,1)$.}
\label{fig:non_unim}
\end{figure}

\section{Further Questions}
\label{sec:underlying}
Note that, in the case of undirected cycles studied by Braun-Meyer \cite{BM17}, the reflexivity is influenced by the number of vertices of the graph (Theorem~\ref{thm:ref_cycles}). On the other hand, when passing to the undirected case we discussed in Section~\ref{sec:4}, it is clear (from Theorem~\ref{thm:ref_dir_cycles}) that one can build reflexive Laplacian simplices starting from cycles of any length. This can be done by orienting a cycle in one of the two directions.

It is natural to wonder how the structure of the underlying simple graph $G_D$ of a digraph $D$ plays a role in determining the reflexivity of $P_D$ (see Section~\ref{sec:4} for the definition of underlying simple graph). We formalize this problem with the three questions below. We define an \textit{oriented graph} to be a simple digraph $D$ such that if there is an edge pointing from $i$ to $j$, then there is no edge pointing from $j$ to $i$.

\begin{question}
\label{q1}
For any simple graph $G$ on $[n]$, does there exist an oriented graph $D$ on $[n]$ such that $G_D=G$ and $P_D$ is a reflexive $(n-1)$-simplex? 
\end{question}
\begin{question}
\label{q2}
For any simple graph $G$ on $[n]$, does there exist a digraph $D$ on $[n]$ such that $G_D=G$ and $P_D$ is a reflexive $(n-1)$-simplex? 
\end{question}
\begin{question}
\label{q3}
For any simple graph $G$ on $[n]$, does there exist a simple digraph $D$ on $[n]$ such that $G_D=G$ and $P_D$ is a reflexive $(n-1)$-simplex? 
\end{question}

We have been able to answer negatively the first two questions with the following examples. Let $G_1$ be the following graph.
\begin{figure}[H]
	\centering
	\begin{tikzpicture}[>=stealth',shorten >=1pt,auto,node distance=2cm,
	thick,main node/.style={},scale=1]
	
	\node[main node] at (90:1)	(1) {$1$};
	\node[main node] at (180:1)	(2) {$2$};
	\node[main node] at (270:1)	(5) {$5$};
	\node[main node] at (0:1)	(4) {$4$};
	\node[main node] at (0:0)	(3) {$3$};
	\node[main node] at (180:2)	(6) {$G_1=$};
	
	\path[every node/.style={font=\sffamily\small}]
	(1) edge [bend right=0] (2)
		edge [bend left=0] (3)
		edge [bend left=0] (4)
	(5) edge [bend left=0] (2)
		edge [bend left=0] (3)
		edge [bend left=0] (4);
	\end{tikzpicture}
\end{figure}
\noindent Assume that $D_1$ is an orientation of $G_1$ such that $P_{D_1}$ is a reflexive $4$-simplex. Since $D_1$ is strongly connected, we may assume, without loss of generality, that the edges $(5,3),(3,1),(1,2),(2,5)$ are in $E(D_1)$. It follows that either $(1,4),(4,5)$ or $(5,4),(4,1)$ are in $E(D_1)$. In both cases, $P_{D_1}$ is not reflexive. In particular there are graphs that cannot be oriented to produce reflexive Laplacian simplices. Hence Question \ref{q1} is not true in general. However, $G_1$ is not a counterexample of Question \ref{q2}. Indeed, let $D_1'$ be the following simple digraph.
\begin{figure}[H]
	\centering
	\begin{tikzpicture}[->,>=stealth',shorten >=1pt,auto,node distance=2cm,
	thick,main node/.style={},scale=1]
	
	\node[main node] at (90:1)	(1) {$1$};
	\node[main node] at (180:1)	(2) {$2$};
	\node[main node] at (270:1)	(5) {$5$};
	\node[main node] at (0:1)	(4) {$4$};
	\node[main node] at (0:0)	(3) {$3$};
	\node[main node] at (180:2)	(6) {$D_1'=$};
	
	\path[every node/.style={font=\sffamily\small}]
	(1) edge [bend right=0] (2)
	edge [bend left=20] (3)
	edge [bend left=0] (4)
	(2) edge [bend left=20] (5)
	(3) edge [bend left=20] (1)
	(4) edge [bend left=20] (5)
	(5) edge [bend left=20] (2)
	edge [bend left=0] (3)
	edge [bend left=20] (4);
	\end{tikzpicture}
\end{figure}
\noindent Note that its underlying simple graph is still $G_1$, but $P_{D_1'}$ is reflexive. However, let $G_2$ be the following graph.
\begin{figure}[H]
	\centering
	\begin{tikzpicture}[>=stealth',shorten >=1pt,auto,node distance=2cm,
	thick,main node/.style={},scale=1]
	
	\node[main node] at (120:1)	(1) {$1$};
	\node[main node] at (60:1)	(2) {$2$};
	\node[main node] at (180:1)	(3) {$3$};
	\node[main node] at (0:1)		(4) {$4$};
	\node[main node] at (240:1)	(5) {$5$};
	\node[main node] at (300:1)	(6) {$6$};
	\node[main node] at (180:2)	(7) {$G_2=$};
	
	\path[every node/.style={font=\sffamily\small}]
	(1) edge [bend right=0] (3)
	(2) edge [bend right=0] (4)
	(3)	edge [bend left=0] (5)
	edge [bend left=0] (4)
	edge [bend left=0] (6)
	(4) edge [bend left=0] (5)
	edge [bend left=0] (6)
	(5)	edge [bend left=0] (6);
	\end{tikzpicture}
\end{figure}
\noindent
Note that there are finitely many possible directed simple graphs having it as underlying graph. A computer-assisted check shows that none of them produces a reflexive Laplacian simplex. Hence Question \ref{q2} is also not true in general. However, $G_2$ is not a counterexample of Question \ref{q3}. Indeed, let $D'_2$ be the following digraph (the label on an edge from $i$ to $j$, if present, represents the total number of edges from $i$ to $j$).
\begin{figure}[H]
	\centering
	\begin{tikzpicture}[->,>=stealth',shorten >=1pt,auto,node distance=2cm,
	thick,main node/.style={},scale=1.8]
	
	\node[main node] at (120:1)	(1) {$1$};
	\node[main node] at (60:1)	(2) {$2$};
	\node[main node] at (180:1)	(3) {$3$};
	\node[main node] at (0:1)		(4) {$4$};
	\node[main node] at (240:1)	(5) {$5$};
	\node[main node] at (300:1)	(6) {$6$};
	\node[main node] at (180:2)	(7) {$D_2'=$};
	
	\path[every node/.style={font=\sffamily\small,fill=white}]
	(1) edge [bend right=0] (3)
	(2) edge [bend right=0] (4)
	(3)	edge [bend right=30]	node[xshift=-.35cm,yshift=-.1cm] {${\tiny 3}$}  (5)
		edge [bend left=20]		node[xshift=-.0cm,yshift=-.2cm] {${\tiny 3}$}  (4)
		edge [bend left=10] 	node[xshift=-.3cm,yshift=-.2cm] {${\tiny 3}$}  (6)
	(4) edge [bend left=0] 		node[xshift=-.0cm,yshift=.2cm] {${\tiny 3}$}  (3)
		edge [bend left=10] 	node[xshift=-.0cm,yshift=.3cm] {${\tiny 3}$}  (5)
		edge [bend left=30] 	node[xshift=-.1cm,yshift=.3cm] {${\tiny 3}$}  (6)
	(5)	edge [bend left=0] 		node[xshift=.2cm,yshift=.2cm] {${\tiny 3}$}  (3)
		edge [bend left=10] 	node[xshift=.2cm,yshift=-.2cm] {${\tiny 3}$}  (4)
		edge [bend left=0] 		node[xshift=.0cm,yshift=-.2cm] {${\tiny 3}$}  (6)
	(6)	edge [bend left=10] 	node[xshift=.0cm,yshift=.3cm] {${\tiny 3}$}  (3)
		edge [bend left=0] 		node[xshift=.1cm,yshift=-.3cm] {${\tiny 3}$}  (4)
		edge [bend left=20] 	node[xshift=.0cm,yshift=.1cm] {${\tiny 3}$}  (5);
	\end{tikzpicture}
\end{figure}
\noindent
Then $P_{D'_2}$ is reflexive simplex. Question~\ref{q3} remains open.

\bibliographystyle{plain}

\begin{thebibliography}{10}

\bibitem{Bat94}
Victor~V. Batyrev.
\newblock Dual polyhedra and mirror symmetry for {C}alabi-{Y}au hypersurfaces
  in toric varieties.
\newblock {\em J. Algebraic Geom.}, 3(3):493--535, 1994.

\bibitem{BR15}
Matthias Beck and Sinai Robins.
\newblock {\em Computing the continuous discretely}.
\newblock Undergraduate Texts in Mathematics. Springer, New York, second
  edition, 2015.
\newblock Integer-point enumeration in polyhedra, With illustrations by David
  Austin.

\bibitem{Bra16}
Benjamin Braun.
\newblock Unimodality problems in {E}hrhart theory.
\newblock In {\em Recent trends in combinatorics}, volume 159 of {\em IMA Vol.
  Math. Appl.}, pages 687--711. Springer, [Cham], 2016.

\bibitem{BDS16}
Benjamin Braun, Robert Davis, and Liam Solus.
\newblock Detecting the integer decomposition property and ehrhart unimodality
  in reflexive simplices, 2016.
\newblock Preprint \arXiv{1608.01614}.

\bibitem{BM17}
Benjamin Braun and Marie Meyer.
\newblock Laplacian simplices, 2017.
\newblock Preprint \arXiv{1706.07085}.

\bibitem{MR1310575}
Francesco Brenti.
\newblock Log-concave and unimodal sequences in algebra, combinatorics, and
  geometry: an update.
\newblock In {\em Jerusalem combinatorics '93}, volume 178 of {\em Contemp.
  Math.}, pages 71--89. Amer. Math. Soc., Providence, RI, 1994.

\bibitem{Chu97}
Fan R.~K. Chung.
\newblock {\em Spectral graph theory}, volume~92 of {\em CBMS Regional
  Conference Series in Mathematics}.
\newblock Published for the Conference Board of the Mathematical Sciences,
  Washington, DC; by the American Mathematical Society, Providence, RI, 1997.

\bibitem{Con02}
Heinke Conrads.
\newblock Weighted projective spaces and reflexive simplices.
\newblock {\em Manuscripta Math.}, 107(2):215--227, 2002.

\bibitem{Dol82}
Igor Dolgachev.
\newblock Weighted projective varieties.
\newblock In {\em Group actions and vector fields ({V}ancouver, {B}.{C}.,
  1981)}, volume 956 of {\em Lecture Notes in Math.}, pages 34--71. Springer,
  Berlin, 1982.

\bibitem{Hib92}
Takayuki Hibi.
\newblock Dual polytopes of rational convex polytopes.
\newblock {\em Combinatorica}, 12(2):237--240, 1992.

\bibitem{Ian00}
A.~R. Iano-Fletcher.
\newblock Working with weighted complete intersections.
\newblock In {\em Explicit birational geometry of 3-folds}, volume 281 of {\em
  London Math. Soc. Lecture Note Ser.}, pages 101--173. Cambridge Univ. Press,
  Cambridge, 2000.

\bibitem{Kas13}
Alexander Kasprzyk.
\newblock Classifying terminal weighted projective space, 2013.
\newblock Preprint \arXiv{1304.3029}.

\bibitem{MHNH11}
Tetsushi Matsui, Akihiro Higashitani, Yuuki Nagazawa, Hidefumi Ohsugi, and
  Takayuki Hibi.
\newblock Roots of {E}hrhart polynomials arising from graphs.
\newblock {\em J. Algebraic Combin.}, 34(4):721--749, 2011.

\bibitem{OH99}
H.~Ohsugi and T.~Hibi.
\newblock A normal {$(0,1)$}-polytope none of whose regular triangulations is
  unimodular.
\newblock {\em Discrete Comput. Geom.}, 21(2):201--204, 1999.

\bibitem{OH98}
Hidefumi Ohsugi and Takayuki Hibi.
\newblock Normal polytopes arising from finite graphs.
\newblock {\em J. Algebra}, 207(2):409--426, 1998.

\bibitem{OH06}
Hidefumi Ohsugi and Takayuki Hibi.
\newblock Special simplices and {G}orenstein toric rings.
\newblock {\em J. Combin. Theory Ser. A}, 113(4):718--725, 2006.

\bibitem{MR1110850}
Richard~P. Stanley.
\newblock Log-concave and unimodal sequences in algebra, combinatorics, and
  geometry.
\newblock In {\em Graph theory and its applications: {E}ast and {W}est
  ({J}inan, 1986)}, volume 576 of {\em Ann. New York Acad. Sci.}, pages
  500--535. New York Acad. Sci., New York, 1989.

\bibitem{Sta99}
Richard~P. Stanley.
\newblock {\em Enumerative combinatorics. {V}ol. 2}, volume~62 of {\em
  Cambridge Studies in Advanced Mathematics}.
\newblock Cambridge University Press, Cambridge, 1999.
\newblock With a foreword by Gian-Carlo Rota and appendix 1 by Sergey Fomin.

\bibitem{TZ14}
Tuan Tran and G\"unter~M. Ziegler.
\newblock Extremal edge polytopes.
\newblock {\em Electron. J. Combin.}, 21(2):Paper 2.57, 16, 2014.

\bibitem{Vil98}
Rafael~H. Villarreal.
\newblock On the equations of the edge cone of a graph and some applications.
\newblock {\em Manuscripta Math.}, 97(3):309--317, 1998.

\end{thebibliography}

\end{document}